\newif\ifsiam     
\newif\ifnummat   
\newif\ifcvs      
\newif\ifmoc      
    \newtheorem{theorem}{Theorem}[section]
    \newtheorem{lemma}[theorem]{Lemma}
    \theoremstyle{definition}
    \newtheorem{definition}[theorem]{Definition}
    \theoremstyle{remark}
    \newtheorem{remark}[theorem]{Remark}
    \numberwithin{equation}{section}
    \numberwithin{equation}{section}
    \numberwithin{theorem}{section}
    \numberwithin{lemma}{section}
    \numberwithin{theorem}{section}
    \numberwithin{definition}{section}
    \numberwithin{corollary}{section}
    \numberwithin{equation}{section}
\DeclareMathOperator{\divg}{div}
\newcommand{\ab}[2]{\langle#1,#2\rangle}
\newcommand{\sumth}[1]{\sum_{K\in\mathcal{T}_{h}}#1}
\newcommand{\Th}{\mathcal{T}_{h}}
\newcommand{\Eh}{\mathcal{E}_{h}}
\newcommand{\vertiii}[1]{{\left\vert\kern-0.25ex\left\vert\kern-0.25ex\left\vert #1 
    \right\vert\kern-0.25ex\right\vert\kern-0.25ex\right\vert}}
\def\yulAuthor{Yuwen Li}
    \def\yulShortAuthor{Y.~Li}
    \def\yulShortAuthor{Y. Li}
\def\yulAddress{Department of Mathematics, The Pennsylvania State University,
University Park, PA 16802.}
\def\yulEmail{yuwenli925@gmail.com}
\title{Superconvergent flux recovery of the Rannacher-Turek nonconforming element}
\def\shortTitle{Superconvergence of the Rannacher-Turek element}
\def\myKeywords{superconvergence, rectangular meshes,  Rannacher--Turek element, Raviart--Thomas element, Crouzeix--Raviart element}
\def\myAMS{65N15, 65N30}
\def\myAbstract{
This work presents superconvergence estimates of the nonconforming Rannacher--Turek element for second order elliptic equations on any cubical meshes in $\mathbb{R}^{2}$ and $\mathbb{R}^{3}$. In particular, a corrected numerical flux is shown to be superclose to the Raviart--Thomas interpolant of the exact flux. We then design a superconvergent recovery operator based on local weighted averaging. Combining the supercloseness and the recovery operator, we prove that the recovered flux superconverges to the exact flux. As a by-product, we obtain a superconvergent recovery estimate of the Crouzeix--Raviart element method for general elliptic equations.
}
\begin{document}


\ifnummat
   \author{\yulAuthor%
           }
  \institute{\yulShortAuthor : \yulAddress\, {Email:}\yulEmail
              }
  \date{Received:  \  / Accepted: date}
  \maketitle
  \begin{abstract}\myAbstract\end{abstract}
  \begin{keywords}\myKeywords\end{keywords}
  \begin{subclass}\myAMS \end{subclass}
  \markboth{\yulShortAuthor }{\shortTitle}
\fi

\ifmoc
    \bibliographystyle{amsplain}
    \author[\yulShortAuthor]{\yulAuthor}
    \address{\yulAddress}
    \email{\yulEmail}

    \subjclass[2010]{Primary \myAMS}
    \date{\today}
    \begin{abstract}\myAbstract\end{abstract}
    \maketitle
    \markboth{
    \yulShortAuthor }{\shortTitle}
\fi

\section{Introduction and preliminaries}\label{sec1}
Finite element superconvergent recovery is quite popular in practice for its simplicity and ability to develop asymptotically exact a posteriori error estimators. The theory of superconvergent recovery for conforming Lagrange elements is well-established, see, e.g., \cite{BS1977,Thomee1977,ZZ1987,ZZ1992,BX2003a,BX2003b,BaXuZheng2007,XZ2003,ZhangNaga2005}. Let $u_{h}$ be the finite element solution approximating the PDE solution $u$. The framework of superconvergent recovery is often divided into two steps. The starting point is a supercloseness estimate between $u_{h}$ and the finite element \emph{canonical interpolant} $u_{I}$, where $u_I$ and $u$ share the same degrees of freedom (dofs) corresponding to certain finite element. Then a postprocessed solution $R_{h}u_{h}$ is shown to superconverge to $u$ in suitable norm, provided $R_{h}$ is a bounded operator with super-approximation property. 

On the other hand, since the interelement boundary continuity of nonconforming elements is very weak, superconvergence analysis of nonconforming methods is often more difficult and limited. The  Crouzeix--Raviart (CR) \cite{CR1973,BS2008} element for the Poisson equation is an important model problem for the analysis of nonconforming methods. In this case, it can be numerically observed that the CR canonical interpolant $u_{I}$ and the finite element solution $u_{h}$ are not superclose in the energy norm. Hence the aforementioned recovery framework does not work. In \cite{Ye2001}, Ye developed superconvergence estimates of the CR element using least-squares surface fitting \cite{Wang2000,WangYe2001}. Guo and Huang \cite{GuoZhang2015} presented a polynomial preserving gradient recovery method for the CR element with numerically confirmed superconvergence. Based on an equivalence between the CR method and the lowest order Raviart--Thomas (RT) method for Poisson's equation (cf.~\cite{Marini1985,AB1985}), Hu and Ma \cite{HM2016} proved a recovery-type superconvergence estimate for the CR element using superconvergence of RT elements in \cite{Brandts1994}. This result is then improved and generalized in e.g., \cite{YL2018,HuMa2018,ZHY2019}.  Readers are also referred to e.g., \cite{ChenLi1994,Chen2002,MS2009,LN2008} and references therein for superconvergence of other nonconforming elements. 

The nonconforming Rannacher--Turek (NCRT) element \cite{RT1992} is a natural generalization of the CR element on quadrilateral meshes. It is noted that there is a superconvergence estimate of the NCRT element at some special points under certain mildly distorted \emph{square} meshes, see \cite{MSX2006}. For the Poisson equation, it has been shown in \cite{LTZ2005} that several rectangular nonconforming methods do not admit natural supercloseness estimates. In particular, $u_{I}$ and $u_{h}$ from the NCRT element are superclose in the energy norm only under \emph{square} meshes. To overcome this barrier, the authors of \cite{LTZ2005} enriched the NCRT element by one degree of freedom at the centroid of each element and proved superconvergent gradient recovery estimates of the modified nonconforming element. 

In this paper, we shall consider the standard NCRT method \eqref{RT} for solving the general elliptic equation \eqref{elliptic}.  First we compute a corrected numerical flux $\bm{\sigma}_{h}$ from the NCRT finite element solution, see Theorem \ref{superclose}. We shall show that $\bm{\sigma}_{h}$ is superclose to $\Pi_{h}(a\nabla u)$ by comparing it with an auxiliary $H(\divg)$-conforming flux $\bar{\bm{\sigma}}_{h}$ and using well-established superconvergence tools and techniques for RT elements in e.g., \cite{Duran1990,Brandts1994,YL2018}. Here $\Pi_{h}$ is the canonical interpolation of the lowest order rectangular RT element. We then construct a local edge-based weighted averaging operator $A_{h}$, which makes $\|a\nabla u-A_{h}\Pi_{h}(a\nabla u)\|$ supersmall on any rectangular mesh. Hence $A_{h}\bm{\sigma}_{h}$ superconverges to $a\nabla u$ on any rectangular mesh by a triangle-inequality argument. To the best of our knowledge, this is the first superconvergent recovery method for the NCRT element on arbitrary rectangular meshes. As far as we know, there is no superconvergence analysis of the tetrahedral CR element in $\mathbb{R}^3.$ In contrast, our superconvergence results could be directly generalized to the  cubic NCRT element in $\mathbb{R}^3$, see Section \ref{sec4}.

For elliptic equations with variable coefficients and lower order terms, Arbogast and Chen in \cite{AC1995} can reformulate various mixed methods as modified nonconforming methods. However, the general equivalence expression is complicated and it is unclear how far the standard nonconforming finite element solution is from the modified one. On the other hand, superconvergence analysis of $H(\divg)$-conforming mixed finite elements is well established, see, e.g., \cite{Duran1990,Brandts1994,YL2018,BaLi2019}. Hence we shall relate nonconforming methods to their mixed counterparts as in \cite{HM2016}. In our superconvergence analysis, it is not necessary to rewrite the NCRT method \eqref{RT} as an equivalent mixed method for the \emph{general elliptic equation}. All we need is the equivalence given by Lemma \ref{mainlemma} for the \emph{Poisson} equation. As far as we know, it is the first superconvergence estimate of the CR and NCRT element methods for the general elliptic equation.

In the rest of this section, we introduce preliminary definitions and notations. Let $\Omega=[\omega_1,\omega_2]\times[\omega_3,\omega_4]\subset\mathbb{R}^{2}$ be a rectangle. Consider the second order elliptic equation 
\begin{subequations}\label{elliptic}
\begin{align}
-\nabla\cdot(a\nabla u)+\bm{b}\cdot\nabla u+cu&=f\quad \text{in}\ \Omega,\\
u&=g\quad \text{on}\ \partial\Omega,
\end{align}
\end{subequations} 
where $a(\bm{x})\geq a_{0}>0$ for all $\bm{x}=(x_{1},x_{2})^{T}\in\Omega$, and $a, \bm{b}, c$, and $f$ are smooth functions in $\bm{x}$ on $\bar{\Omega}$.

Let $\Th$ be a partition of $\Omega$ by rectangles. Given a rectangle $K\in\Th$, let $\ell_{K,1}$ and $\ell_{K,2}$ denote the width and height of $K$ and $h=\max_{K\in\Th}\max(\ell_{K,1},\ell_{K,2})$ the mesh size. We assume that $h<1$ and $\Th$ is nondegenerate, i.e. 
$$\max_{K\in\Th}\max\left\{\frac{\ell_{K,1}}{\ell_{K,2}},\frac{\ell_{K,2}}{\ell_{K,1}}\right\}\leq C_{\Th}<\infty,$$ where $C_{\Th}$ is an absolute constant independent of $h$. Let $\Eh$, $\Eh^{o}$, and $\Eh^{\partial}$ denote the set of edges, interior edges, and boundary edges, respectively. The following edge-based patch $\omega_{E}$ will be frequently used. 
\begin{enumerate}
\item For $E\in\Eh^{o}$, let $\omega_{E}=K^{+}\cup K^{-}$ where $K^{+}$ and $K^{-}$ are the two adjacent rectangles sharing $E$. 
\item For $E\in\Eh^{\partial}$, let $\omega_{E}=K$, where $K$ is the rectangle having $E$ as an edge.
\end{enumerate} 
The NCRT finite element space is defined as
\begin{equation*}
\begin{aligned}
\mathcal{V}_{g,h}:=&\{v_{h}\in L^{2}(\Omega): v_{h}|_{K}\in\text{span}\{1,x_{1},x_{2},x_{1}^{2}-x_{2}^{2}\}\text{ for\ all}\ K\in\Th, \\
&\fint_{E}v_{h}\text{ is\ single-valued for all}\ E\in\Eh^{o}, \fint_{E}v_{h}=\fint_{E}g\text{ for all } E\in\Eh^{\partial}\},
\end{aligned}
\end{equation*}
where $\fint_{E}v:=\frac{1}{|E|}\int_{E}v$ is the mean value of $v$ on $E$.  The name `nonconforming' is due to the fact $\mathcal{V}_{g,h}\not\subseteq H^1(\Omega)$. Let  $$H^1(\Th):=\{v\in L_2(\Omega): v|_K\in H^1(K)~\forall K\in\Th\}$$  be the space of piecewise $H^1$ functions and $\nabla_{h}$ denote the piecewise gradient w.r.t.~$\Th$, namely, 
$$(\nabla_h v)|_K:=\nabla (v|_K),\quad\forall v\in H^1(\Th),\quad \forall K\in\Th.$$
The NCRT method for \eqref{elliptic} is to find $u_{h}\in\mathcal{V}_{g,h}$, such that
\begin{equation}\label{RT}
\ab{a\nabla_{h}u_{h}}{\nabla_{h}v}+\ab{\bm{b}\cdot\nabla_{h}u_{h}}{v}+\langle cu_{h},v\rangle=\ab{f}{v},\quad\forall v\in\mathcal{V}_{0,h},
\end{equation}
where $\ab{\cdot}{\cdot}$ is the $L_2(\Omega)$-inner product.
Throughout this paper, we adopt the notation $A\lesssim B$ when $A\leq CB$ for some generic constant $C$ that is independent of $h$. We assume that the standard a priori error estimate for the NCRT method holds:
\begin{equation}\label{apriori}
\|u-u_{h}\|+h\|\nabla_{h}(u-u_{h})\|\lesssim h^{2}\|u\|_{H^{2}},
\end{equation}
where $\|\cdot\|$ denotes the norm $\|\cdot\|_{L_2(\Omega)}$ and $\|\cdot\|_{H^{2}}$ abbreviates $\|\cdot\|_{H^{2}(\Omega)}$, similar for other Sobolev norms. Readers are referred to \cite{BS2008} for the analogue of \eqref{apriori} for the CR method. The estimate \eqref{apriori} implies that \eqref{RT} is a first order method in the discrete energy norm $\|\nabla_h\cdot\|$. Therefore, an improved recovery-type error estimate of order $1+s$ suffices to declare superconvergence,  where $s>0$ is an absolute constant. Similarly, we say two functions are superclose whenever the $\|\nabla_h\cdot\|$-distance between them is $O(h^{1+s}).$

The following NCRT element space $\widetilde{\mathcal{V}}_{h}$ using DOFs based on pointwise function evaluation will be used in Section \ref{sec3}.
\begin{equation*}
\begin{aligned}
\widetilde{\mathcal{V}}_{h}:=&\{v_{h}\in L^{2}(\Omega): v_{h}|_{K}\in\text{span}\{1,x_{1},x_{2},x_{1}^{2}-x_{2}^{2}\}\text{ for\ all}\ K\in\Th, \\
&v_{h}\text{ is\ continuous at the midpoint of each}\ E\in\Eh^{o}\}.
\end{aligned}
\end{equation*}
Let $Q_{k,l}(K)$ denote the set of polynomials of degree $\leq k$ in $x_{1}$ and of degree $\leq l$ in $x_{2}$ on the element $K$. Let $$H(\divg,\Omega):=\{\bm{\tau}\in L_2(\Omega)\times L_2(\Omega): \nabla\cdot\bm{\tau}\in L_2(\Omega)\}.$$ The lowest order rectangular RT finite element space is
\begin{equation*}
\mathcal{RT}_{h}:=\{\bm{\tau}_{h}\in H(\divg,\Omega): \bm{\tau}_{h}|_{K}\in Q_{1,0}(K)\times Q_{0,1}(K)\text{ for all }K\in\Th\}.
\end{equation*}
For convenience we also introduce the broken RT space
$$\mathcal{RT}_{h}^{-1}:=\{\bm{\tau}_{h}\in L_2(\Omega)\times L_2(\Omega): \bm{\tau}_{h}|_{K}\in Q_{1,0}(K)\times Q_{0,1}(K), \forall K\in\Th\}.$$ 
The dofs for $\mathcal{RT}_h$ consist of integrals of normal components of a vector-valued function on each edge in $\Th$. Given $\bm{\tau}\in H^1(\Omega)\times H^1(\Omega)$, the RT canonical interpolant $\Pi_{h}\bm{\tau}$ is the unique finite element function in $\mathcal{RT}_{h}$ such that
\begin{equation}\label{RTinterpolation}
    \int_{E}(\Pi_{h}\bm{\tau})\cdot\bm{n}_E=\int_{E}\bm{\tau}\cdot\bm{n}_E,\quad\forall E\in\Eh,
\end{equation} 
where $\bm{n}_E$ is a unit normal to $E$. Let $P_{h}$ be the $L_2(\Omega)$-projection onto the space of piecewise constant functions. It is well known that 
\begin{equation}\label{commuting}
\nabla\cdot\Pi_{h}\bm{\tau}=P_{h}\nabla\cdot\bm{\tau}.
\end{equation}
Let $E\in\Eh^{o}$ and $K^{+}, K^{-}$ be the two rectangles sharing $E$. Let $\bm{n}^{+}$ and $\bm{n}^{-}$ denote the outward unit normal induced by $K^{+}$ and $K^{-}$ respectively. In the analysis of nonconforming methods, it is convenient to introduce notations for jumps and averages on $E$: 
\begin{equation*}
\begin{aligned}
&\llbracket\bm{\tau}\rrbracket:=\bm{\tau}|_{K^{+}}\cdot\bm{n}^{+}+\bm{\tau}|_{K^{-}}\cdot\bm{n}^{-},\\
&\{\bm{\tau}\}:=(\bm{\tau}|_{K^{+}}+\bm{\tau}|_{K^{-}})/2,\\
&\llbracket v\rrbracket:=(v|_{K^{+}}\bm{n}^{+}+v|_{K^{-}}\bm{n}^{-})/2,\\
&\{v\}:=(v|_{K^{+}}+v|_{K^{-}})/2,
\end{aligned}
\end{equation*}
where $\bm{\tau}$ is a vector and $v$ is a scalar. For $E\in\Eh^{\partial}$,
\begin{equation*}
\llbracket\bm{\tau}\rrbracket:=\bm{\tau}\cdot\bm{n},\quad\{v\}:=v,\quad\llbracket v\rrbracket:=\bm{0}.
\end{equation*}
where $\bm{n}$ is the outward unit normal to $\partial\Omega$. It is readily checked that
\begin{equation}\label{product}
\begin{aligned}
\llbracket\bm{\tau}v\rrbracket&=\llbracket\bm{\tau}\rrbracket\{v\}+\llbracket v\rrbracket\cdot\{\bm{\tau}\}.
\end{aligned}
\end{equation}
By these notations, a useful fact is that 
\begin{equation}\label{equivRT}
\bm{\tau}_{h}\in\mathcal{RT}_{h}\text{ if\ and\ only\ if } \bm{\tau}_{h}\in\mathcal{RT}_{h}^{-1}\text{ and }\llbracket\bm{\tau}_{h}\rrbracket=0~\forall E\in\Eh^{o}.
\end{equation} 

\textbf{Abbreviation.} For the reader's convenience, abbreviations of finite elements in this paper are summarized as follows.
\begin{align*}
    &\text{Rannacher--Turek: NCRT}\\
    &\text{Raviart--Thomas: RT}\\
    &\text{Crouzeix--Raviart: CR}
\end{align*} 

The rest of this paper is organized as follows. Section \ref{sec2} discusses the supercloseness estimate in Theorem \ref{superclose}. In Section \ref{sec3}, we propose a postprocessing operator and prove the recovery superconvergence estimate in Theorem \ref{superconvergence}. In Section \ref{sec4}, we extend our superconvergence analysis to the CR element and NCRT element in $\mathbb{R}^3$. Numerical experiments are presented in Section \ref{sec5}. Concluding remarks are given in Section \ref{sec6}.

\section{Supercloseness}\label{sec2}
In this section, we derive a supercloseness estimate for the NCRT element, which is essential to develop superconvergent flux recovery.
First we need a lemma in the spirit of Marini (cf. \cite{Marini1985}). 
\begin{lemma}\label{mainlemma}
Let $\bar{f}$ be a piecewise constant, $\bm{\tau}_{h}|_{K}\in Q_{1,0}(K)\times Q_{0,1}(K)$ and $\nabla\cdot(\bm{\tau}_{h}|_{K})=0$ for all $K\in\Th$. Assume that
\begin{equation}\label{var}
\ab{\bm{\tau}_{h}}{\nabla_{h}v}=\ab{\bar{f}}{v}
\end{equation}
for all $v\in\mathcal{V}_{0,h}$. Then $\bm{\tau}_{h}-\bar{f}\bm{r}_{h}\in\mathcal{RT}_{h},$ with
$$\bm{r}_{h}|_{K}(x_1,x_2):=\left( \frac{\ell_{K,2}^{2}}{\ell_{K,1}^{2}+\ell_{K,2}^{2}}(x_{1}-x_{K,1}),~\frac{\ell_{K,1}^{2}}{\ell_{K,1}^{2}+\ell_{K,2}^{2}}(x_{2}-x_{K,2}) \right)^{T},$$
where $K=[x_{1,i},x_{1,i+1}]\times[x_{2,j},x_{2,j+1}]$, $\ell_{K,1}=x_{1,i+1}-x_{1,i}$, $\ell_{K,2}=x_{2,j+1}-x_{2,j}$, and $(x_{K,1},x_{K,2})^{T}$ is the centroid of $K$.
\end{lemma}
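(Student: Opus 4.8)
The plan is to verify the two conditions characterizing $\mathcal{RT}_h$ in \eqref{equivRT}: membership in the broken space $\mathcal{RT}_h^{-1}$ and vanishing of the normal jump across every interior edge. Set $\bm{\eta}_h := \bm{\tau}_h - \bar{f}\bm{r}_h$. First I would note that $\bar{f}\bm{r}_h|_K$ has its first component affine in $x_1$ alone and its second component affine in $x_2$ alone, so $\bar{f}\bm{r}_h\in\mathcal{RT}_h^{-1}$ and hence $\bm{\eta}_h\in\mathcal{RT}_h^{-1}$; a direct computation gives $\nabla\cdot\bm{r}_h|_K=1$, so that $\nabla\cdot\bm{\eta}_h|_K=-\bar{f}|_K$ since $\nabla\cdot\bm{\tau}_h|_K=0$. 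It then remains only to show $\lr{\bm{\eta}_h}=0$ on every $E\in\Eh^o$.

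The key algebraic fact, and the reason behind the particular weights in $\bm{r}_h$, is the elementwise orthogonality $\ab{\bar{f}\bm{r}_h}{\nabla_h v}=0$ for every $v$ in the NCRT space. On each $K$, writing $v|_K=a+bx_1+cx_2+d(x_1^2-x_2^2)$, the affine part of $\nabla v$ integrates against $\bm{r}_h$ to zero by the centroid symmetry $\int_K(x_i-x_{K,i})=0$, so only the bubble term survives; I would then check that the two surviving moments $\tfrac{\ell_{K,2}^2}{\ell_{K,1}^2+\ell_{K,2}^2}\int_K(x_1-x_{K,1})^2$ and $\tfrac{\ell_{K,1}^2}{\ell_{K,1}^2+\ell_{K,2}^2}\int_K(x_2-x_{K,2})^2$ coincide (both equal $\tfrac{\ell_{K,1}^3\ell_{K,2}^3}{12(\ell_{K,1}^2+\ell_{K,2}^2)}$), so their difference in $\int_K\bm{r}_h\cdot\nabla(x_1^2-x_2^2)$ cancels. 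This is exactly the cancellation the stated coefficients are engineered to produce.

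Next I would integrate $\ab{\bm{\eta}_h}{\nabla_h v}$ by parts elementwise. Using $\nabla\cdot\bm{\eta}_h=-\bar{f}$, the bulk term reproduces $\ab{\bar{f}}{v}$, and by the product identity \eqref{product} the boundary terms collapse to a sum over edges of $\lr{\bm{\eta}_h}\{v\}+\lr{v}\cdot\{\bm{\eta}_h\}$. Two structural facts are then used: the normal component of any $\mathcal{RT}_h^{-1}$ field is constant along each edge, and for $v\in\mathcal{V}_{0,h}$ the mean $\fint_E v$ is single-valued on interior edges and vanishes on boundary edges. Together these annihilate the $\lr{v}\cdot\{\bm{\eta}_h\}$ cross terms and all boundary-edge contributions, leaving $\ab{\bm{\eta}_h}{\nabla_h v}-\ab{\bar{f}}{v}=\sum_{E\in\Eh^o}\lr{\bm{\eta}_h}|_E\,|E|\,\fint_E v$. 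On the other hand, \eqref{var} combined with the orthogonality above yields $\ab{\bm{\eta}_h}{\nabla_h v}=\ab{\bm{\tau}_h}{\nabla_h v}=\ab{\bar{f}}{v}$, so the left-hand side is zero for every $v\in\mathcal{V}_{0,h}$.

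Finally I would use that the interior-edge means $\fint_E v$ are precisely the free degrees of freedom of $\mathcal{V}_{0,h}$: choosing the NCRT basis function with unit mean on a prescribed interior edge $E_0$ and zero mean on every other edge isolates a single summand and forces $\lr{\bm{\eta}_h}|_{E_0}=0$. Ranging over all interior edges and invoking \eqref{equivRT} gives $\bm{\eta}_h=\bm{\tau}_h-\bar{f}\bm{r}_h\in\mathcal{RT}_h$. I expect the main obstacle to be the bookkeeping in the third paragraph, namely showing the average/jump cross terms and boundary terms vanish, since it is there that the weak interelement continuity of the nonconforming space must be reconciled with the edgewise constancy of RT normal traces; the orthogonality computation, though the conceptual heart of the argument, is a short explicit integral.
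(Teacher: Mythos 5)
Your proposal is correct: the moment computation (both weighted moments equal $\ell_{K,1}^3\ell_{K,2}^3/(12(\ell_{K,1}^2+\ell_{K,2}^2))$, so the bubble contributions cancel), the constancy of $\mathcal{RT}_h^{-1}$ normal traces on axis-aligned edges, and the annihilation of the cross and boundary terms all check out, and testing with the edge basis functions then forces each interior jump to vanish. The route differs from the paper's mainly in how the weights of $\bm{r}_h$ enter. The paper works edge by edge from the start: it tests \eqref{var} with the basis function $v_E$ of a single interior edge, integrates by parts using $\nabla_h\cdot\bm{\tau}_h=0$ to obtain $\int_E\lr{\bm{\tau}_h}=\int_{K^+\cup K^-}\bar f v_E$ as in \eqref{inter}, then evaluates $\int_{K^\pm}v_E$ explicitly (formula \eqref{intvE}, which requires the explicit bubble-containing form of $v_E$) and matches the result against the jump of $\bar f\bm{r}_h$. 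You instead subtract $\bar f\bm{r}_h$ first, prove the elementwise orthogonality $\ab{\bar f\bm{r}_h}{\nabla_h v}=0$, and derive one global identity $\ab{\bm{\eta}_h}{\nabla_h v}-\ab{\bar f}{v}=\sum_{E\in\Eh^o}\lr{\bm{\eta}_h}|_E\,|E|\,\fint_E v$ before specializing to the edge basis. The two computations are equivalent---integrating your orthogonality by parts elementwise recovers exactly \eqref{intvE}---but your packaging has two small advantages: it never needs pointwise values of $v_E$ in element interiors, and it isolates the identity $\int_K\bm{r}_h\cdot\nabla v=0$, which is precisely the computation the paper redoes later in the proof of Theorem \ref{superclose} (there in the form $\int_K\bm{r}_h\cdot\bm{\tau}_h=0$ for divergence-free $\bm{\tau}_h$), so it makes the design of the weights in $\bm{r}_h$ transparent and reusable. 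The paper's version is shorter and more direct for this single lemma.
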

\begin{proof}
Consider any vertical edge $E\in\Eh^{o}$ and the two rectangles 
$$K^{-}=[x_{1,i},x_{1,i+1}]\times[x_{2,j},x_{2,j+1}],\quad K^{+}=[x_{1,i+1},x_{1,i+2}]\times[x_{2,j},x_{2,j+1}]$$ 
sharing it. Let $v\in\mathcal{V}_{0,h}$ be the basis function such that 
$$\fint_{E}v_{E}=1,\quad\fint_{E'}v_{E}=0\text{ for } \Eh\ni E'\neq E.$$ 
Note that $\bm{\tau}_{h}\cdot(1,0)^{T}$ is a constant on $E$. It then follows from \eqref{var} with $v=v_E$, $\nabla_{h}\cdot\bm{\tau}_{h}=0$ and integration by parts that
\begin{equation}\label{inter}
\int_{E}\llbracket\bm{\tau}_{h}\rrbracket=\int_{K^{+}\cup K^{-}}\bar{f}v_{E}.
\end{equation}
Direct calculation shows that
\begin{equation}\label{intvE}
    \int_{K^{\pm}}v_{E}=\frac{|K^{\pm}|\ell_{K^{\pm},2}^{2}}{2(\ell_{K^{\pm},1}^{2}+\ell_{K^{\pm},2}^{2})}.
\end{equation}
Then combining \eqref{intvE} with \eqref{inter} and the definition of $\bm{r}_h$ yields
\begin{equation}\label{edgejump}
\llbracket\bm{\tau}_{h}-\bar{f}\bm{r}_{h}\rrbracket=0\text{ on }E.
\end{equation}
Similarly, \eqref{edgejump} also holds for horizontal edges. Combining \eqref{edgejump} with the fact $(\bm{\tau}_{h}-\bar{f}\bm{r}_{h})|_{K}\in Q_{1,0}(K)\times Q_{0,1}(K)$, we conclude that $\bm{\tau}_{h}-\bar{f}\bm{r}_{h}\in\mathcal{RT}_{h}.$
\qed\end{proof}
\begin{remark}
It seems that the NCRT method using dofs based on pointwise function evaluation does not have a similar equivalence. 
\end{remark}
To apply Lemma \ref{mainlemma}, we then introduce the auxiliary nonconforming method: Find $\bar{u}_{h}\in\mathcal{V}_{g,h},$ such that
\begin{equation}\label{auxnc}
\ab{a\nabla_{h}\bar{u}_{h}}{\nabla_{h}v}=\ab{P_{h}(f-cu-\bm{b}\cdot\nabla u)}{v},\quad\forall v\in\mathcal{V}_{0,h}.
\end{equation}
The following lemma shows that $u_{h}$ and $\bar{u}_{h}$ are superclose in the $H^{1}$-norm.
\begin{lemma}\label{superuubar}
Let $u_{h}$ and $\bar{u}_{h}$ solve \eqref{RT} and \eqref{auxnc}, respectively. Then
$$\|\nabla_{h}(u_{h}-\bar{u}_{h})\|\lesssim h^{2}\|u\|_{H^{2}}.$$
\end{lemma}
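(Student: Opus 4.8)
The plan is to derive an error equation for $w_h := u_h - \bar u_h$ and test it against $w_h$ itself. Since both $u_h$ and $\bar u_h$ lie in $\mathcal V_{g,h}$ and share the same boundary edge means, $w_h \in \mathcal V_{0,h}$. Subtracting \eqref{auxnc} from \eqref{RT} and using \eqref{elliptic} to write $f = g + cu + \bm b\cdot\nabla u$ with $g := f - cu - \bm b\cdot\nabla u = -\divg(a\nabla u)$, together with the self-adjointness of $P_h$, I would obtain, for every $v\in\mathcal V_{0,h}$,
\begin{equation*}
\ab{a\nabla_h w_h}{\nabla_h v} = \ab{g}{v - P_h v} + \ab{c(u - u_h)}{v} + \ab{\bm b\cdot(\nabla u - \nabla_h u_h)}{v}.
\end{equation*}
Choosing $v = w_h$ and invoking coercivity ($a\geq a_0>0$) reduces the lemma to showing that each of the three terms on the right is $O(h^2\|u\|_{H^2}\|\nabla_h w_h\|)$; the conclusion then follows after dividing by $\|\nabla_h w_h\|$.

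The first two terms are routine. For the first, the $L^2$-orthogonality of $P_h$ gives $\ab{g}{w_h - P_h w_h} = \ab{g - P_h g}{w_h - P_h w_h}$, and the elementwise Poincar\'e estimates $\|g - P_h g\|\lesssim h\|g\|_{H^1}\lesssim h\|u\|_{H^2}$ and $\|w_h - P_h w_h\|\lesssim h\|\nabla_h w_h\|$ close it. For the second, the a priori bound \eqref{apriori} yields $\|u - u_h\|\lesssim h^2\|u\|_{H^2}$, and a broken Poincar\'e--Friedrichs inequality for $\mathcal V_{0,h}$ gives $\|w_h\|\lesssim\|\nabla_h w_h\|$.

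The third (convection) term is the crux, since $\nabla u - \nabla_h u_h$ is only first order accurate in $L^2$ by \eqref{apriori}, so a naive Cauchy--Schwarz bound loses a power of $h$. My plan is to split $w_h$ into $P_h w_h$ and $w_h - P_h w_h$, and to replace $\bm b$ by its elementwise mean $\bar{\bm b}$. The oscillatory pieces $\ab{\bm b\cdot(\nabla u-\nabla_h u_h)}{w_h - P_h w_h}$ and $\ab{(\bm b - \bar{\bm b})\cdot(\nabla u - \nabla_h u_h)}{P_h w_h}$ are each $O(h^2\|u\|_{H^2}\|\nabla_h w_h\|)$, because $\|w_h - P_h w_h\|\lesssim h\|\nabla_h w_h\|$ and $\|\bm b - \bar{\bm b}\|_{L^\infty}\lesssim h$ respectively, while $\|\nabla u - \nabla_h u_h\|\lesssim h\|u\|_{H^2}$. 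In the remaining piece $\ab{\bar{\bm b}\cdot(\nabla u - \nabla_h u_h)}{P_h w_h}$ both $\bar{\bm b}$ and $P_h w_h$ are piecewise constant, so on each $K$ I can factor them out and use the divergence theorem to write $\int_K(\nabla u - \nabla_h u_h) = \int_{\partial K}(u - u_h)\bm n_K$. Summing over elements and reorganizing over edges, the single-valuedness of the edge means $\fint_E u_h$ (and $\fint_E u_h = \fint_E g = \fint_E u$ on $\partial\Omega$) makes the scalar $J_E := \int_E(u - u_h)$ well defined and equal on the two sides of each $E$, forces the boundary edges to drop out, and leaves only the jump of $\bar{\bm b}\,P_h w_h$ across interior edges.

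It then remains to control the edge sum $\sum_{E\in\Eh^{o}} J_E\,\bm n\cdot\big(\bar{\bm b}|_{K^+}P_h w_h|_{K^+}-\bar{\bm b}|_{K^-}P_h w_h|_{K^-}\big)$. I would split this jump into the ($O(h)$) jump of $\bar{\bm b}$ against a cell mean of $w_h$, and $\bar{\bm b}$ against the cell-mean jump $\fint_{K^+}w_h - \fint_{K^-}w_h$, which is $O(\|\nabla_h w_h\|_{L^2(\omega_E)})$. Combining these with the sharp edge-sum estimate $\sum_{E}|J_E|^2\lesssim h^4\|u\|_{H^2}^2$ --- obtained from $|J_E|\leq|E|^{1/2}\|u - u_h\|_{L^2(E)}$, the trace inequality, and the full strength of \eqref{apriori} --- a Cauchy--Schwarz argument over the (finite-overlap) patches $\omega_E$ delivers the desired $O(h^2\|u\|_{H^2}\|\nabla_h w_h\|)$ bound. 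I expect this convection term to be the main obstacle: specifically, establishing $\sum_E|J_E|^2\lesssim h^4\|u\|_{H^2}^2$ and exploiting the weak edge-mean continuity through the divergence theorem to expose the cancellation that recovers the missing power of $h$. Everything else is standard.
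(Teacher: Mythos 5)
Your proof is correct, and for the decisive convection term it takes a genuinely different route from the paper's. The setup coincides: both arguments subtract \eqref{auxnc} from \eqref{RT}, use the $L_2$-orthogonality of $P_h$ to write the data term as $\ab{g}{v-P_hv}$ with $g=f-cu-\bm{b}\cdot\nabla u$, and dispatch this term and $\ab{c(u-u_h)}{v}$ by elementwise Poincar\'e estimates, \eqref{apriori}, and the discrete Poincar\'e inequality. For $\ab{\bm{b}\cdot\nabla_h(u-u_h)}{v}$ the paper keeps a general $v\in\mathcal{V}_{0,h}$: it integrates by parts elementwise so the derivative lands on $\bm{b}v$, applies the jump/average identity \eqref{product}, inserts the free constants $\bm{c}_E=\bm{d}_E=\bm{b}(m_E)\fint_E v$ (free because jumps of constants vanish and $\fint_E\llbracket u-u_h\rrbracket=\bm{0}$), and closes with the trace inequality \eqref{trace} applied to both factors. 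You instead specialize $v=w_h$ at once, split $w_h$ into $P_hw_h$ plus fluctuation and $\bm{b}$ into $\bar{\bm{b}}$ plus oscillation, kill the two oscillatory pieces by Cauchy--Schwarz, and convert the frozen-coefficient piece via the divergence theorem into a sum over interior edges of $J_E=\int_E(u-u_h)$ against the jump of the piecewise constant $\bar{\bm{b}}\,P_hw_h$; the weak continuity enters through the well-definedness of $J_E$ and the vanishing of boundary contributions, and you finish with $\sum_E|J_E|^2\lesssim h^4\|u\|_{H^2}^2$ paired against jumps of cell means bounded by $\|\nabla_hw_h\|_{L_2(\omega_E)}$. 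Both arguments thus rest on the same two ingredients --- the edge-mean continuity of $u-u_h$ and the full strength of \eqref{apriori} filtered through a trace inequality --- but yours avoids the jump/average calculus and the inserted edge constants, making the cancellation mechanism explicit in the scalars $J_E$, while the paper's version bounds the functional $v\mapsto\ab{\bm{b}\cdot\nabla_h(u-u_h)}{v}$ uniformly in $v$ and its intermediate exponents are dimension-free (your cell-mean scalings such as $h|\fint_K w_h|\lesssim\|w_h\|_{L_2(K)}$ hold as stated only in $\mathbb{R}^2$, although the final powers of $h$ recombine to $h^2$ in any dimension). One shared cosmetic point: the data term really contributes $h^2(\|f\|_{H^1}+\|u\|_{H^2})$, and both your write-up and the paper silently absorb $\|f\|_{H^1}$ into the stated bound.
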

\begin{proof}
Subtracting \eqref{auxnc} from \eqref{RT} gives
 $$\ab{a\nabla_{h}(u_{h}-\bar{u}_{h})}{\nabla_{h}v}\\
=\ab{f-cu_{h}-\bm{b}\cdot\nabla_{h}u_{h}-P_{h}(f-cu-\bm{b}\cdot\nabla u)}{v},$$
where $v\in\mathcal{V}_{0,h}$.
It then follows from \eqref{apriori} 
that  
\begin{equation}\label{all}
\begin{aligned}
&\ab{a\nabla_{h}(u_{h}-\bar{u}_{h})}{\nabla_{h}v}\\
&=\ab{f-cu-\bm{b}\cdot\nabla u-P_{h}(f-cu-\bm{b}\cdot\nabla u)}{v-P_{h}v}\\
&\quad+\ab{c(u-u_{h})}{v}+\ab{\bm{b}\cdot\nabla_{h}(u-u_{h})}{v}\\
&=O(h^{2})(\|f\|_{H^{1}}+\|u\|_{H^{2}})\|\nabla_{h}v\|+\ab{\bm{b}\cdot\nabla_{h}(u-u_{h})}{v}.
\end{aligned}
\end{equation}
It remains to show that $\ab{\bm{b}\cdot\nabla_{h}(u-u_{h})}{v}$ is supersmall. By integrating by parts, \eqref{product}, and $\fint_{E}\llbracket u-u_{h}\rrbracket=\bm{0}$, we have
\begin{equation*}
\begin{aligned}
&\ab{\bm{b}\cdot\nabla_{h}(u-u_{h})}{v}\\
&\quad=\sumth\int_{\partial K}(u-u_{h})v\bm{b}\cdot\bm{n}-\int_{K}(u-u_{h})\nabla\cdot(\bm{b}v)\\
&\quad=\sum_{E\in\Eh}\int_{E}\{u-u_{h}\}\llbracket v\bm{b}-\bm{c}_{E}\rrbracket+\llbracket u-u_{h}\rrbracket\cdot\{v\bm{b}-\bm{d}_{E}\}\\
&\qquad-\int_{\Omega}(u-u_{h})\nabla_{h}\cdot(\bm{b}v)
\end{aligned}
\end{equation*}
for any constants $\bm{c}_{E}\in\mathbb{R}^{2}$ and $\bm{d}_{E}\in\mathbb{R}^{2}$. In particular, let $\bm{c}_{E}=\bm{d}_{E}=\bm{b}(m_{E})\fint_{E}v$, where $m_E$ is the midpoint of $E.$ By the trace inequality
\begin{equation}\label{trace}
\|w\|_{L_2(\partial K)}\lesssim h^{-\frac{1}{2}}\|w\|_{L_2(K)}+h^{\frac{1}{2}}\|\nabla w\|_{L_2(K)},
\end{equation}
we have 
\begin{equation}\label{trace1}
\begin{aligned}
&\|\{u-u_{h}\}\|_{L_2(E)}+\|\llbracket u-u_{h}\rrbracket\|_{L_2(E)}\\
&\quad\lesssim h^{-\frac{1}{2}}\|u-u_{h}\|_{L_2(\omega_{E})}+h^{\frac{1}{2}}\|\nabla_{h}(u-u_{h})\|_{L_2(\omega_{E})}
\end{aligned}
\end{equation}
and 
\begin{equation}\label{trace2}
\|\llbracket v\bm{b}-\bm{c}_{E}\rrbracket\|_{L_2(E)}+\|\{v\bm{b}-\bm{d}_{E}\}\|_{L_2(E)}\\
\lesssim h^{\frac{1}{2}}\|\nabla_{h}(\bm{b}v)\|_{L_2(\omega_{E})}.
\end{equation}
It follows from the Cauchy--Schwarz inequality,  \eqref{trace1}, \eqref{trace2} and \eqref{apriori} that 
\begin{equation}\label{buuhv}
\begin{aligned}
&|\ab{\bm{b}\cdot\nabla_{h}(u-u_{h})}{v}|\\
&\lesssim\sum_{E\in\Eh}\big(\|\{u-u_{h}\}\|_{L_2(E)}\|\llbracket v\bm{b}-\bm{c}_{E}\rrbracket\|_{L_2(E)}\\
&\quad+\|\llbracket u-u_{h}\rrbracket\|_{L_2(E)}\|\{v\bm{b}-\bm{d}_{E}\}\|_{L_2(E)}\big)+\|u-u_{h}\|\|\nabla_{h}\cdot(\bm{b}v)\|\\
&\leq\sum_{E\in\Eh}\big(\|u-u_{h}\|_{L_2(\omega_{E})}+h\|\nabla_{h}(u-u_{h})\|_{L_2(\omega_{E})}\big)\|\nabla_{h}(\bm{b}v)\|_{L_2(\omega_{E})}\\
&\quad+\|u-u_{h}\|\|\nabla_{h}\cdot(\bm{b}v)\|\\
&\lesssim\big(\|u-u_{h}\|+h\|\nabla_{h}(u-u_{h})\|\big)\|\nabla_{h}(\bm{b}v)\|+\|u-u_{h}\|\|\nabla_{h}\cdot(\bm{b}v)\|\\
&\lesssim h^{2}\|u\|_{H^{2}}\big(\|v\|+\|\nabla_{h} v\|\big).
\end{aligned}
\end{equation}
Combining \eqref{buuhv} with \eqref{all} and using the discrete Poincar\'e  inequality (cf.~Theorem 10.6.12.~in \cite{BS2008})
$\|v\|\lesssim\|\nabla_{h}v\|$,
we complete the proof.
\qed\end{proof}

Now we are in a position to present supercloseness results. Let $Q_{h}$ be the $L_{2}$-projection onto $\nabla _{h}\mathcal{V}_{0,h}$ and  
$$\bm{\sigma}_{h}:=Q_{h}(a\nabla_{h}u_{h})-\bm{r}_{h}P_{h}(f-cu_{h}-\bm{b}\cdot\nabla_{h}u_{h})$$ 
be the corrected flux, where $\bm{r}_{h}$ is defined in Lemma \ref{mainlemma}. Note that $Q_{h}$ is indeed {\color{red}an} element-by-element projection and $Q_{h}(a\nabla_{h}u_{h})=a\nabla_{h}u_{h}$ if $a$ is a piecewise constant. The next theorem shows that $\bm{\sigma}_h$ approximates the exact flux $\bm{\sigma}:=a\nabla u$ very well. 
\begin{theorem}\label{superclose} 
It holds that
\begin{equation*}
\|\Pi_{h}\bm{\sigma}-\bm{\sigma}_{h}\|\lesssim
h^{2}\|u\|_{H^{3}}.
\end{equation*}
\end{theorem}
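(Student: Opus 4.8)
The plan is to bridge $\bm{\sigma}_h$ and $\Pi_h\bm{\sigma}$ by the $H(\divg)$-conforming auxiliary flux
\[
\bar{\bm{\sigma}}_h := Q_h(a\nabla_h\bar u_h)-\bm{r}_h\bar f,\qquad \bar f:=P_h(f-cu-\bm{b}\cdot\nabla u),
\]
where $\bar u_h$ solves \eqref{auxnc}, and then to split $\|\Pi_h\bm{\sigma}-\bm{\sigma}_h\|\le\|\Pi_h\bm{\sigma}-\bar{\bm{\sigma}}_h\|+\|\bar{\bm{\sigma}}_h-\bm{\sigma}_h\|$. The first preliminary step is to verify $\bar{\bm{\sigma}}_h\in\mathcal{RT}_h$. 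Since the range of $Q_h$ consists of piecewise gradient fields, $Q_h(a\nabla_h\bar u_h)|_K\in Q_{1,0}(K)\times Q_{0,1}(K)$ with vanishing elementwise divergence, and testing \eqref{auxnc} gives $\langle Q_h(a\nabla_h\bar u_h),\nabla_h v\rangle=\langle a\nabla_h\bar u_h,\nabla_h v\rangle=\langle\bar f,v\rangle$ for all $v\in\mathcal{V}_{0,h}$; hence Lemma \ref{mainlemma} applies with $\bm{\tau}_h=Q_h(a\nabla_h\bar u_h)$ and yields $\bar{\bm{\sigma}}_h\in\mathcal{RT}_h$.

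The term $\|\bar{\bm{\sigma}}_h-\bm{\sigma}_h\|$ is the routine one. Expanding the difference as $Q_h\big(a\nabla_h(\bar u_h-u_h)\big)-\bm{r}_hP_h\big(c(u_h-u)+\bm{b}\cdot\nabla_h(u_h-u)\big)$ and using the $L_2$-stability of $Q_h$ and $P_h$, the bound $\|\bm{r}_h\|_{L^\infty}\lesssim h$, the supercloseness $\|\nabla_h(u_h-\bar u_h)\|\lesssim h^2\|u\|_{H^2}$ of Lemma \ref{superuubar}, and the a priori estimate \eqref{apriori}, I expect $\|\bar{\bm{\sigma}}_h-\bm{\sigma}_h\|\lesssim h^2\|u\|_{H^2}$.

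For the main term I set $\bm{e}_h:=\Pi_h\bm{\sigma}-\bar{\bm{\sigma}}_h$. The commuting relation \eqref{commuting} gives $\nabla\cdot\Pi_h\bm{\sigma}=P_h\nabla\cdot\bm{\sigma}=-\bar f$, while $\nabla\cdot\bm{r}_h=1$ on each $K$ gives $\nabla\cdot\bar{\bm{\sigma}}_h=-\bar f$; thus $\bm{e}_h$ is a divergence-free field in $\mathcal{RT}_h$, so $\bm{e}_h|_K$ lies in the local gradient space $G_K:=\nabla\big(\text{span}\{1,x_1,x_2,x_1^2-x_2^2\}\big)$. I would then estimate $\langle a^{-1}\bm{e}_h,\bm{e}_h\rangle$: writing $a^{-1}Q_h(a\nabla_h\bar u_h)=\nabla_h\bar u_h-a^{-1}(I-Q_h)(a\nabla_h\bar u_h)$ and integrating $\langle\nabla_h\bar u_h,\bm{e}_h\rangle$ by parts elementwise (using $\nabla\cdot\bm{e}_h=0$, the single-valued edge means in $\mathcal{V}_{g,h}$, and the boundary data), together with the continuous identity $\langle a^{-1}\bm{\sigma},\bm{e}_h\rangle=\langle\nabla u,\bm{e}_h\rangle=\int_{\partial\Omega}g\,\bm{e}_h\cdot\bm{n}$, leads to
\[
\langle a^{-1}\bm{e}_h,\bm{e}_h\rangle=\langle a^{-1}(\Pi_h\bm{\sigma}-\bm{\sigma}),\bm{e}_h\rangle+\langle a^{-1}(I-Q_h)(a\nabla_h\bar u_h),\bm{e}_h\rangle+\langle a^{-1}\bm{r}_h\bar f,\bm{e}_h\rangle.
\]
The last two terms are supersmall by design: because $(I-Q_h)(a\nabla_h\bar u_h)|_K\perp G_K$ while $\bm{e}_h|_K\in G_K$, and because the weights in $\bm{r}_h$ were chosen so that $\int_K\bm{r}_h\bar f\cdot\bm{w}=0$ for every divergence-free $\bm{w}\in Q_{1,0}(K)\times Q_{0,1}(K)$, both inner products vanish once $a^{-1}$ is frozen to its element mean $\overline{a^{-1}}_K$; the remaining fluctuation $a^{-1}-\overline{a^{-1}}_K=O(h)$ multiplies the $O(h)$ quantities $\|(I-Q_h)(a\nabla_h\bar u_h)\|$ and $\|\bm{r}_h\bar f\|$, each leaving $O(h^2)\|\bm{e}_h\|$.

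The crux is the first term $\langle a^{-1}(\Pi_h\bm{\sigma}-\bm{\sigma}),\bm{e}_h\rangle$, the RT interpolation error tested against a divergence-free element of $\mathcal{RT}_h$; a crude Cauchy--Schwarz bound only gives $O(h)$. Here I would invoke the superconvergence of the lowest-order rectangular RT interpolation, exploiting $\int_E(\bm{\sigma}-\Pi_h\bm{\sigma})\cdot\bm{n}_E=0$ from \eqref{RTinterpolation} and the edge-midpoint symmetry of rectangles (again freezing $a^{-1}$ to its cell mean to absorb the variable weight), to obtain $|\langle a^{-1}(\Pi_h\bm{\sigma}-\bm{\sigma}),\bm{e}_h\rangle|\lesssim h^2\|u\|_{H^3}\|\bm{e}_h\|$. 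This is precisely where the rectangular-mesh RT tools of \cite{Duran1990,Brandts1994,YL2018} and the extra $H^3$-regularity enter, and I expect it to be the main obstacle. Combining the three bounds with the coercivity $\langle a^{-1}\bm{e}_h,\bm{e}_h\rangle\gtrsim\|\bm{e}_h\|^2$ gives $\|\bm{e}_h\|\lesssim h^2\|u\|_{H^3}$, and adding the routine term completes the proof.
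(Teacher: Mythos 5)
Your proposal is correct, and it shares the paper's skeleton: the same auxiliary flux $\bar{\bm{\sigma}}_h$ built from the solution $\bar u_h$ of \eqref{auxnc}, the same use of Lemma \ref{mainlemma} to place $\bar{\bm{\sigma}}_h$ in $\mathcal{RT}_h$, the same divergence-free error function (your $\bm{e}_h$ is the paper's $\bm{\tau}_h$), Dur\'an's elementwise superconvergence estimate \cite{Duran1990} for the interpolation term, and the same routine bound for $\|\bar{\bm{\sigma}}_h-\bm{\sigma}_h\|$ via Lemma \ref{superuubar} and \eqref{apriori}. Where you genuinely diverge is in the central identity. The paper works with the plain $L_2$ inner product, $\|\bm{\tau}_h\|^2=\ab{\Pi_h\bm{\sigma}-\bm{\sigma}}{\bm{\tau}_h}+\ab{a\nabla_h(u-\bar u_h)}{\bm{\tau}_h}$, where your two ``design'' terms vanish exactly ($Q_h$-orthogonality, and $\int_K\bm{r}_h\cdot\bm{\tau}_h=0$), so the whole difficulty sits in the nonconformity term $II=\ab{a\nabla_h(u-\bar u_h)}{\bm{\tau}_h}$: the paper integrates by parts elementwise, splits off the volume term $-\ab{(u-\bar u_h)\nabla a}{\bm{\tau}_h}$, and bounds the edge sum using the $O(h)$ factor $a-\fint_E a$, trace inequalities, \eqref{apriori} and Lemma \ref{superuubar}. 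You instead test with the $a^{-1}$-weighted inner product, under which the nonconformity term becomes $\ab{\nabla_h(u-\bar u_h)}{\bm{e}_h}$ and vanishes \emph{exactly} (the same zero-mean-jump/constant-normal-component/boundary-mean argument, now with no coefficient to spoil the cancellation), while the two design terms no longer vanish and instead pair the $O(h)$ fluctuation $a^{-1}-\overline{a^{-1}}_K$ with the $O(h)$ quantities $\|(I-Q_h)(a\nabla_h\bar u_h)\|$ and $\|\bm{r}_h\bar f\|$. The trade is fair in both directions: your arrangement dispenses with the trace-inequality edge estimates and the $\nabla a$ term, and makes the main estimate independent of Lemma \ref{superuubar} and \eqref{apriori} (they enter only through $\|\bar{\bm{\sigma}}_h-\bm{\sigma}_h\|$), at the cost of needing Dur\'an's estimate in elementwise (frozen-coefficient) form, a uniform bound on $\|\nabla_h\bar u_h\|$, and the weighted coercivity $\ab{a^{-1}\bm{e}_h}{\bm{e}_h}\gtrsim\|\bm{e}_h\|^2$; the paper keeps the unweighted norm and exact orthogonalities but pays with the jump/trace analysis of $II$. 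Both routes deliver the same $h^2\|u\|_{H^3}$ bound.
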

\begin{proof}
Let $\bar{\bm{\sigma}}_{h}:=Q_{h}(a\nabla_{h}\bar{u}_{h})-\bm{r}_{h}P_{h}(f-cu-\bm{b}\cdot\nabla u)$. Using the definition of $\bar{u}_h$, $\nabla_{h}\cdot Q_{h}=0$ and Lemma \ref{mainlemma}, we conclude that $\bar{\bm{\sigma}}_{h}\in \mathcal{RT}_{h}\subset H(\divg,\Omega)$. Let $\bm{\tau}_{h}=\Pi_{h}\bm{\sigma}-\bar{\bm{\sigma}}_{h}$. It follows from \eqref{commuting} and $\nabla_{h}\cdot\bm{r}_{h}=1$ that
\begin{equation*}
\nabla\cdot\bm{\tau}_{h}=P_{h}\nabla\cdot(a\nabla u)-P_{h}(f-cu-\bm{b}\cdot\nabla u)=0.
\end{equation*}
Hence $\bm{\tau}_{h}|_{K}=(c_{1}x_{1}+c_{2},-c_{1}x_{2}+c_{3})^{T}$ for some $c_{i}\in\mathbb{R}$ on an element $K\in\Th$. On the other hand, direct calculation shows that
\begin{equation*}
\begin{aligned}
&\int_{K}\bm{r}_{h}\cdot\bm{\tau}_{h}=\int_{K}\bm{r}_{h}\cdot\big(\bm{\tau}_{h}-(c_{2}+c_{1}x_{K,1},c_{3}-c_{1}x_{K,2})^{T}\big)\\
&\quad=\frac{c_{1}}{\ell_{K,1}^{2}+\ell_{K,2}^{2}}\int_{K}\ell_{K,2}^{2}(x_{1}-x_{K,1})^{2}-\ell_{K,1}^{2}(x_{2}-x_{K,2})^{2}=0.
\end{aligned}
\end{equation*}
With the above identity, $\bm{\sigma}=a\nabla u$ and $\bm{\tau}_h\in\nabla_h\mathcal{V}_{0,h},$ we obtain
\begin{equation}\label{total}
\begin{aligned}
&\|\Pi_{h}\bm{\sigma}-\bar{\bm{\sigma}}_{h}\|^{2}=I+II,
\end{aligned}
\end{equation}
where 
\begin{align*}
I=\ab{\Pi_{h}\bm{\sigma}-\bm{\sigma}}{\bm{\tau}_{h}},\quad II=\ab{a\nabla_{h}(u-\bar{u}_{h})}{\bm{\tau}_{h}}.
\end{align*}
By Lemma 3.1 with $k=0$ in \cite{Duran1990} and the Bramble--Hilbert lemma,
\begin{equation}\label{bdI}
|I|\lesssim|\bm{\sigma}|_{H^{2}}\|\bm{\tau}_{h}\|.
\end{equation}
For part $II$, due to $\nabla\cdot(\bm{\tau}_{h}|_{K})=0$, we have
\begin{equation}\label{totalII}
\begin{aligned}
II&=\sum_{K\in\Th}\int_{K}a\nabla(u-\bar{u}_{h})\cdot\bm{\tau}_{h}\\
&=\sum_{K\in\Th}\int_{K}(\nabla(a(u-\bar{u}_{h}))-(u-\bar{u}_{h})\nabla a)\cdot\bm{\tau}_{h}\\
&=II_{1}+II_{2},
\end{aligned}
\end{equation}
where $II_1$ and $II_2$ are given by
\begin{align*}
    II_{1}=\sum_{K\in\Th}\int_{\partial K}a(u-\bar{u}_{h})\bm{\tau}_{h}\cdot\bm{n},\quad II_2=-\ab{(u-\bar{u}_{h})\nabla a}{\bm{\tau}_{h}}.
\end{align*}
The part $II_{2}$ is estimated by Lemma \ref{superuubar} and the a priori estimate \eqref{apriori}:
\begin{equation}\label{bdII2}
|II_{2}|\lesssim h^{2}\|u\|_{H^{2}}\|\bm{\tau}_{h}\|.
\end{equation}
Note that the normal component of $\{\bm{\tau}_{h}\}$ is constant on $E$ and $\llbracket\bm{\tau}_{h}\rrbracket=0$ by \eqref{equivRT}. It then follows from $\fint_{E}\llbracket\bar{u}_{h}\rrbracket=\bm{0}$, \eqref{product} , the trace inequality \eqref{trace}, an inverse inequality, \eqref{apriori}, and Lemma \ref{superuubar}, that
\begin{equation}\label{bdII1}
\begin{aligned}
&II_{1}=\sum_{E\in\Eh}\int_{E}\llbracket a(u-\bar{u}_{h})\bm{\tau}_{h}\rrbracket\\
&\quad=\sum_{E\in\Eh}\int_{E}\llbracket(a-\fint_{E}a)(u-\bar{u}_{h})\rrbracket\cdot\{\bm{\tau}_{h}\}\\
&\quad\lesssim h\sum_{E\in\Eh}\|\llbracket u-\bar{u}_{h}\rrbracket\|_{L^{2}(E)}\|\{\bm{\tau}_{h}\}\|_{L^{2}(E)}\\
&\quad\lesssim h^{\frac{1}{2}}\sum_{E\in\Eh}(h^{-\frac{1}{2}}\|u-\bar{u}_{h}\|_{L^{2}(\omega_{E})}+h^{\frac{1}{2}}\|\nabla_{h}(u-\bar{u}_{h})\|_{L^{2}(\omega_{E})})\|\bm{\tau}_{h}\|_{L^{2}(\omega_{E})}\\
&\quad\lesssim \big(\|u-\bar{u}_{h}\|+h\|\nabla_{h}(u-\bar{u}_{h})\|\big)\|\bm{\tau}_{h}\|\lesssim h^{2}\|u\|_{H^{2}}\|\bm{\tau}_{h}\|.
\end{aligned}
\end{equation}
Combining \eqref{total}--\eqref{bdII1}, we obtain
\begin{equation}\label{finalsigmabar}
\|\Pi_{h}\bm{\sigma}-\bar{\bm{\sigma}}_{h}\|\lesssim
h^{2}\|u\|_{H^{3}}.
\end{equation}
On the other hand, Lemma \ref{superuubar} implies 
\begin{equation}\label{sigmasigmabar}
\|{\bm{\sigma}}_{h}-\bar{\bm{\sigma}}_{h}\|\lesssim h^{2}\|u\|_{H^{2}}.
\end{equation}
The theorem then follows from \eqref{finalsigmabar} and \eqref{sigmasigmabar}.
\qed\end{proof}

Key ingredients in the proof of Theorem \ref{superclose} include the RT flux $\bar{\bm{\sigma}}$ and the superconvergence estimate \eqref{bdI} for rectangular RT elements. Similarly, Cockburn et al.~\cite{CoGuWa2009} postprocessed the approximate fluxes from a large class of discontinuous Galerkin methods to obtain $H(\text{div})$-conforming RT fluxes, which facilitates the superconvergence analysis of recovered potentials. 

Theorem \ref{superclose} shows that the corrected flux $\bm{\sigma}_h$ is superclose to the canonical RT interpolant $\Pi_h\bm{\sigma}.$ In contrast, many supercloseness results in the literature are based on corrected interpolants/projections that are superclose to the numerical solution. Readers are referred to \cite{Chen2002,ChenHu2013,CSYZ2015,CaoHuang2017,CSYZ2018} and references therein for superconvergence analysis of $H^1$-conforming and discontinuous Galerkin methods by corrected projection technique  using orthogonal polynomials.

\section{Postprocessing and superconvergence}\label{sec3} 
For the rectangular RT element, Dur\'an \cite{Duran1990} gave a postprocessing operator $K_{h}^{D}$ satisfying
\begin{subequations}\label{Kh}
\begin{align}
&\|K_{h}^{D}\bm{\tau}_{h}\|\lesssim\|\bm{\tau}_{h}\|\text{ for all }\bm{\tau}_{h}\in\mathcal{RT}_{h},\\
&\|\bm{\sigma}-K_{h}^{D}\Pi_{h}\bm{\sigma}\|\lesssim h^{2}|\bm{\sigma}|_{H^{2}}.
\end{align}
\end{subequations}
Here the input for $K_{h}^{D}$ needs to be $H(\divg)$-conforming. Now assume the corrected flux $\bm{\sigma}_{h}\in\mathcal{RT}_{h}$, e.g., $f$ is piecewise constant, $\bm{b}=\bm{0}$, and $c=0$. Using \eqref{Kh}, Theorem \ref{superclose}, and the triangle inequality
\begin{equation*}
\|a\nabla u-K_{h}^{D}\bm{\sigma}_{h}\|\leq\|a\nabla u-K_{h}^{D}\Pi_{h}\bm{\sigma}\|+\|K_{h}^{D}(\Pi_{h}\bm{\sigma}-\bm{\sigma}_{h})\|,
\end{equation*}
we obtain 
\begin{equation*}
\|a\nabla u-K_{h}^{D}\bm{\sigma}_{h}\|\lesssim h^{2}\|u\|_{H^{3}}.
\end{equation*}

However, $\bm{\sigma}_{h}\in\mathcal{RT}^{-1}_{h}$ and $\bm{\sigma}_{h}\notin\mathcal{RT}_{h}$ in general and thus $K_{h}^{D}$ cannot be directly applied to $\bm{\sigma}_{h}$. In this section, we introduce a simple recovery operator $A_{h}$ by local weighted averaging. 
\begin{definition}\label{defAh}
The operator $A_{h}: \mathcal{RT}_{h}^{-1}\rightarrow\widetilde{\mathcal{V}}_{h}$ is defined as follows. 
\begin{enumerate}
\item For each $E\in\Eh^{o}$, let $m$ be the midpoint of $E$. Let $K^{+}$ and $K^{-}$ be the two rectangles sharing $E$ as an edge. Define
\begin{equation*}
(A_{h}\bm{\tau}_{h})(m):=\frac{|K^{-}|}{|K^{+}|+|K^{-}|}\bm{\tau}_{h}|_{K^{+}}(m)+\frac{|K^{+}|}{|K^{+}|+|K^{-}|}\bm{\tau}_{h}|_{K^{-}}(m).
\end{equation*}
\item For each $E\in\Eh^{\partial}$, let $m$ denote the midpoint of $E$ and $K$ the element having $E$ as an edge. Let $E'$ be the edge of $K$ opposite to $E$  with midpoint $m'$. Let $K'$ be the other element having $E'$ as an edge and $m''$ the midpoint of the edge of $K'$ opposite to $E'$. Define
\begin{equation*}
(A_{h}\bm{\tau}_{h})(m):=((A_{h}\bm{\tau}_{h})(m')-w'(A_{h}\bm{\tau}_{h})(m''))/w,
\end{equation*}
where 
\begin{equation*}
w=\frac{|K'|}{|K|+|K'|},\quad w'=\frac{|K|}{|K|+|K'|}.
\end{equation*}
\end{enumerate}
Then $A_{h}\bm{\tau}_{h}$ is the unique finite element in $\widetilde{\mathcal{V}}_{h}$ whose midpoint values are specified in the above two steps. 
\end{definition}

Note that $A_h\bm{\tau}_h\not\in H^1(\Omega)$ and  the weight constants in Definition \ref{defAh} are not chosen in a standard way. We show that $A_{h}$ has a super-approximation property on any nondegenerate rectangular meshes.
\begin{theorem}\label{superapprox}
For $\bm{\tau}_{h}\in\mathcal{RT}_{h}^{-1}$ and $\bm{\tau}\in H^2(\Omega)$, it holds that
\begin{subequations}
\begin{align}
\|A_{h}\bm{\tau}_{h}\|&\lesssim\|\bm{\tau}_{h}\|,\label{a}\\
\|\bm{\tau}-A_{h}\Pi_{h}\bm{\tau}\|&\lesssim h^{2}|\bm{\tau}|_{H^{2}}.\label{b}
\end{align}
\end{subequations}
\end{theorem}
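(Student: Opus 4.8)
The plan is to prove the two estimates separately, the decisive point being that the (non-standard) weights in Definition \ref{defAh} are calibrated so that $A_{h}\Pi_{h}$ reproduces affine vector fields exactly. For the stability bound \eqref{a} I would first reduce to edge-midpoint values: since a function in $\widetilde{\mathcal{V}}_{h}$ is determined by its values at the midpoints of the edges of each $K\in\Th$, a scaling argument to a reference rectangle together with the finite-dimensionality of $\mathrm{span}\{1,x_{1},x_{2},x_{1}^{2}-x_{2}^{2}\}$ yields $\|A_{h}\bm{\tau}_{h}\|_{L_{2}(K)}^{2}\lesssim|K|\sum_{E\subset\partial K}|(A_{h}\bm{\tau}_{h})(m_{E})|^{2}$ with a constant depending only on $C_{\Th}$. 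At an interior edge the midpoint value is a convex combination of the two one-sided values $\bm{\tau}_{h}|_{K^{\pm}}(m_{E})$, each controlled by $|K^{\pm}|^{-1/2}\|\bm{\tau}_{h}\|_{L_{2}(K^{\pm})}$ through the pointwise norm-equivalence on $\mathcal{RT}_{h}^{-1}$. At a boundary edge I would use the extrapolation formula, observing that the weight $w=|K'|/(|K|+|K'|)$ is bounded below by a constant depending only on $C_{\Th}$ (because $K$ and $K'$ share the edge $E'$, hence the same tangential length, while their aspect ratios are uniformly bounded); the extrapolated value is then dominated by the interior-edge values at $m'$ and $m''$, which are again bounded by local $L_{2}$-norms. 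Summing over $K$ with the finite overlap of the dependency patches gives \eqref{a}.

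The heart of the matter is the reproduction identity $A_{h}\Pi_{h}\bm{p}=\bm{p}$ for every affine vector field $\bm{p}$. I would verify that $A_{h}\Pi_{h}\bm{p}$ and $\bm{p}$ share the same edge-midpoint values; since $\bm{p}$ lies componentwise in $\widetilde{\mathcal{V}}_{h}$ and such functions are determined by their midpoint values, equality follows. At an interior, say vertical, edge the normal component of $\Pi_{h}\bm{p}$ is single-valued and, $\bm{p}$ being affine, equals $\fint_{E}p_{1}=p_{1}(m_{E})$ on both sides, so this component is reproduced independently of the weights; the tangential component of $\Pi_{h}\bm{p}$ on each side equals the value of $p_{2}$ at the respective element centroid, and a direct computation shows that the weights $|K^{\mp}|/(|K^{+}|+|K^{-}|)$ are exactly those making the weighted average of the two centroid values equal to $p_{2}(m_{E})$. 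At a boundary edge the three midpoints $m,m',m''$ are collinear in the direction orthogonal to $E$, and the formula in Definition \ref{defAh} is precisely affine extrapolation along that line, so $\bm{p}(m)$ is recovered from the already-reproduced values $\bm{p}(m')$ and $\bm{p}(m'')$.

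Granting the reproduction identity, I would finish by a standard Bramble--Hilbert and scaling argument. Writing $\omega_{K}$ for the finite collection of elements on which $A_{h}\Pi_{h}\bm{\tau}|_{K}$ depends and taking $\bm{p}$ to be a best affine $L_{2}(\omega_{K})$-approximation of $\bm{\tau}$, the identity gives $\bm{\tau}-A_{h}\Pi_{h}\bm{\tau}=(\bm{\tau}-\bm{p})-A_{h}\Pi_{h}(\bm{\tau}-\bm{p})$ on $K$. Bounding the second term by \eqref{a} and the local $L_{2}$-stability of $\Pi_{h}$ yields $\|A_{h}\Pi_{h}(\bm{\tau}-\bm{p})\|_{L_{2}(K)}\lesssim\|\bm{\tau}-\bm{p}\|_{L_{2}(\omega_{K})}+h|\bm{\tau}-\bm{p}|_{H^{1}(\omega_{K})}$, while Bramble--Hilbert on the patch controls both $\|\bm{\tau}-\bm{p}\|_{L_{2}(\omega_{K})}$ and $h|\bm{\tau}-\bm{p}|_{H^{1}(\omega_{K})}$ by $h^{2}|\bm{\tau}|_{H^{2}(\omega_{K})}$. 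Squaring, summing over $K\in\Th$, and using the finite overlap of the patches then gives \eqref{b}.

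The step I expect to be the main obstacle is establishing the reproduction identity $A_{h}\Pi_{h}\bm{p}=\bm{p}$, and in particular confirming that the unusual averaging weights and the boundary extrapolation are tuned exactly for affine reproduction; once that algebraic identity is secured, the remaining pieces are routine scaling, inverse, and Bramble--Hilbert arguments.
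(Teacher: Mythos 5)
Your proposal is correct, and its skeleton --- local stability, exact reproduction of affine vector fields by $A_{h}\Pi_{h}$ (including the verification that the weights $|K^{\mp}|/(|K^{+}|+|K^{-}|)$ are precisely calibrated to map the two centroid values to the midpoint value, and that the boundary formula is affine extrapolation along the line through $m,m',m''$), then Bramble--Hilbert --- matches the paper's proof in its essential mechanism. The difference lies in how the approximation bound \eqref{b} is closed. The paper works in $L_{\infty}$: it uses the inverse inequality $\|\cdot\|_{L_{2}(K)}\lesssim h\|\cdot\|_{L_{\infty}(K)}$, the $L_{\infty}$-stability of $\mathrm{id}-A_{h}\Pi_{h}$, and the estimate \eqref{l2max}, which rests on the embedding $H^{2}\hookrightarrow L_{\infty}$ and is the reason the paper's $3$D extension (Theorem \ref{superconvergence3d}) is restricted to $d=3$. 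You instead stay entirely in $L_{2}$, invoking the standard Raviart--Thomas interpolation bound $\|\Pi_{h}\bm{v}\|_{L_{2}(K)}\lesssim\|\bm{v}\|_{L_{2}(K)}+h|\bm{v}|_{H^{1}(K)}$ (correctly keeping the $H^{1}$ correction term, since $\Pi_{h}$ is not $L_{2}$-bounded) together with local stability of $A_{h}$ and an $L_{2}$ Bramble--Hilbert estimate on patches; this buys dimension-robustness, as no Sobolev embedding is needed, so your argument would remove the $d\le 3$ obstruction the paper itself points out. A second, smaller difference: the paper states the reproduction property for componentwise $Q_{1,1}$ fields (checking $(x_{2},0)^{T}$ and $(0,x_{1})^{T}$ after noting that $\Pi_{h}$ preserves $Q_{1,0}\times Q_{0,1}$), whereas you work with affine fields only; this is both sufficient for the $O(h^{2})$ rate and cleaner, since affine components genuinely lie in $\widetilde{\mathcal{V}}_{h}$, so the midpoint identity upgrades to a function identity, which is the fact the error-splitting argument actually uses.
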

\begin{proof}
Consider $K\in\Th$ and 
$$\omega_{K}:=\bigcup_{E\subset\partial K}\omega_{E}.$$
Using the stability of $A_{h}$ in the $L_{\infty}$-norm and the inverse inequality, we prove the stability of $A_{h}$ in the $L_{2}$-norm:
\begin{equation*}
\|A_{h}\bm{\tau}_{h}\|_{L_{2}(K)}\lesssim h\|A_{h}\bm{\tau}_{h}\|_{L_{\infty}(K)}\lesssim h\|\bm{\tau}_{h}\|_{L_{\infty}(\omega_{K})}\lesssim \|\bm{\tau}_{h}\|_{L_{2}(\omega_{K})}.
\end{equation*}
\eqref{a} then follows from the above estimate and sum of squares.

Let $E\in\Eh^{o}$ with midpoint $m$ and two adjacent elements $K^{+}, K^{-}$ sharing $E$. For $\bm{\tau}_{1}\in Q_{1,1}(\omega_{E})\times Q_{1,1}(\omega_{E})$, we first want to show $(\bm{\tau}_{1}-A_{h}\Pi_{h}\bm{\tau}_{1})(m)=\bm{0}$. Since $\Pi_{h}$ preserves functions in $Q_{1,0}(\omega_{E})\times Q_{0,1}(\omega_{E})$, it suffices to check when $\bm{\tau}_{1}=(y,0)^{T}$ or $(0,x)^{T}$. By linearity we can assume $m=\bm{0}$ without loss of generality. If $E$ is a horizontal interior edge, let $K^{+}=[-\ell_{1}/2,\ell_{1}/2]\times[0,\ell_{2}^{+}]$, $K^{-}=[-\ell_{1}/2,\ell_{1}/2]\times[-\ell_{2}^{-},0]$. Then,
$$\Pi_{h}\begin{pmatrix}y\\0\end{pmatrix}=\left\{\begin{aligned}(\ell_{2}^{+}/2,0)^{T}\quad\text{if }y>0\\(-\ell_{2}^{-}/2,0)^{T}\quad\text{if }y<0\end{aligned}\right.,\quad\Pi_{h}\begin{pmatrix}0\\x\end{pmatrix}=\begin{pmatrix}0\\0\end{pmatrix}.$$
In each case, $(\bm{\tau}_{1}-A_{h}\Pi_{h}\bm{\tau}_{1})(m)=0$. The same argument works for vertical interior edges.

Let $E\in\Eh^{\partial}$ and $K$ the element having $E$ as an edge. Let $E'$ be the edge of $K$ opposite to $E$ and $K'$ be the element sharing the edge $E'$ with $K$. Let $E''$ be the edge of $K'$ opposite to $E'$ and $K''$ be the element sharing $E''$ with $K'$. Let $\omega_{E}=K\cup K'\cup K''$. By similar argument, we have $(\bm{\tau}_{1}-A_{h}\Pi_{h}\bm{\tau}_{1})(m)=0$ when $\bm{\tau}_{1}\in Q_{1,1}(\omega_{E})\times Q_{1,1}(\omega_{E}).$

Using the property derived in the above three paragraphs, for $\bm{\tau}_{1}\in Q_{1,1}(\omega_{K})\times Q_{1,1}(\omega_{K})$, we have
\begin{equation*}
\begin{aligned}
&\|\bm{\tau}-A_{h}\Pi_{h}\bm{\tau}\|_{L_{2}(K)}\lesssim h\|\bm{\tau}-A_{h}\Pi_{h}\bm{\tau}\|_{L_{\infty}(K)}\\
&\quad\lesssim h\|(\text{id}-A_{h}\Pi_{h})(\bm{\tau}-\bm{\tau}_{1})\|_{L_{\infty}(K)}\lesssim h\|\bm{\tau}-\bm{\tau}_{1}\|_{L_{\infty}(\omega_{K})},
\end{aligned}
\end{equation*}
where $\text{id}$ is the identity mapping. Then by standard finite element approximation theory (cf. Corollary 4.4.7 in \cite{BS2008}), 
\begin{equation}\label{l2max}
\inf_{\bm{\tau}_{1}\in Q_{1,1}(\omega_{K})\times Q_{1,1}(\omega_{K})}\|\bm{\tau}-\bm{\tau}_{1}\|_{L_{\infty}(\omega_{K})}\lesssim h|\bm{\tau}|_{H^{2}(\omega_{K})}
\end{equation}
and thus
\begin{equation}\label{local}
\|\bm{\tau}-A_{h}\Pi_{h}\bm{\tau}\|_{L_2(K)}\lesssim h^{2}|\bm{\tau}|_{H^{2}(\omega_{K})}.
\end{equation}
Then \eqref{b} follows from \eqref{local} and sum of squares.
\qed\end{proof}

Combining Theorems \ref{superclose} and \ref{superapprox}, we obtain the superconvergent flux recovery estimate.
\begin{theorem}\label{superconvergence}
It holds that
\begin{equation*}
\|a\nabla u-A_{h}\bm{\sigma}_{h}\|\lesssim h^{2}\|u\|_{H^{3}}.
\end{equation*}
\end{theorem}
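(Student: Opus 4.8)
The plan is to prove the estimate by a single triangle-inequality argument that splices together the supercloseness result of Theorem \ref{superclose} with the two properties of $A_h$ established in Theorem \ref{superapprox}. Since $\bm{\sigma}=a\nabla u$, I would insert the intermediate quantity $A_h\Pi_h\bm{\sigma}$ and write
\begin{equation*}
\|a\nabla u-A_h\bm{\sigma}_h\|\le\|\bm{\sigma}-A_h\Pi_h\bm{\sigma}\|+\|A_h(\Pi_h\bm{\sigma}-\bm{\sigma}_h)\|,
\end{equation*}
using the linearity of $A_h$ in the second term.

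For the first term I would invoke the super-approximation bound \eqref{b}, which applies because $\bm{\sigma}=a\nabla u\in H^2(\Omega)$: indeed $a$ is smooth on $\bar{\Omega}$, so the product rule gives $|\bm{\sigma}|_{H^2}\lesssim\|u\|_{H^3}$, and hence $\|\bm{\sigma}-A_h\Pi_h\bm{\sigma}\|\lesssim h^2|\bm{\sigma}|_{H^2}\lesssim h^2\|u\|_{H^3}$.

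For the second term, the key observation is that $\Pi_h\bm{\sigma}-\bm{\sigma}_h$ belongs to $\mathcal{RT}_h^{-1}$, since $\Pi_h\bm{\sigma}\in\mathcal{RT}_h\subset\mathcal{RT}_h^{-1}$ and $\bm{\sigma}_h\in\mathcal{RT}_h^{-1}$ by construction. Thus the $L_2$-stability \eqref{a} of $A_h$ on $\mathcal{RT}_h^{-1}$ is legitimately applicable and yields $\|A_h(\Pi_h\bm{\sigma}-\bm{\sigma}_h)\|\lesssim\|\Pi_h\bm{\sigma}-\bm{\sigma}_h\|$. The supercloseness estimate of Theorem \ref{superclose} then bounds the right-hand side by $h^2\|u\|_{H^3}$. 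Adding the two contributions completes the proof.

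I would not expect any serious obstacle here: the entire analytic difficulty has been front-loaded into Theorems \ref{superclose} and \ref{superapprox}, and this final statement is essentially a bookkeeping step. The only points requiring genuine care are verifying that $\Pi_h\bm{\sigma}-\bm{\sigma}_h$ lies in the domain $\mathcal{RT}_h^{-1}$ of $A_h$ so that the stability bound \eqref{a} may be applied, and confirming that the smoothness of $a$ transfers the regularity $u\in H^3$ into the bound $|\bm{\sigma}|_{H^2}\lesssim\|u\|_{H^3}$ needed for \eqref{b}. Both are routine, so the proof amounts to assembling the two ingredients.
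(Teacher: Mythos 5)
Your proof is correct and is essentially identical to the paper's: the paper also inserts $A_h\Pi_h\bm{\sigma}$ via the triangle inequality and concludes directly from Theorems \ref{superclose} and \ref{superapprox}. Your additional verifications (that $\Pi_h\bm{\sigma}-\bm{\sigma}_h\in\mathcal{RT}_h^{-1}$ so the stability bound \eqref{a} applies, and that smoothness of $a$ gives $|\bm{\sigma}|_{H^2}\lesssim\|u\|_{H^3}$ so \eqref{b} applies) are exactly the details the paper leaves implicit.
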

\begin{proof}
Combining Theorems \ref{superclose}, \ref{superapprox} and the triangle inequality
\begin{equation*}
\|a\nabla u-A_{h}\bm{\sigma}_{h}\|\leq\|a\nabla u-A_{h}\Pi_{h}\bm{\sigma}\|+\|A_{h}(\Pi_{h}\bm{\sigma}-\bm{\sigma}_{h})\|
\end{equation*}
completes the proof.
\qed\end{proof}

Consider $\tilde{\bm{\sigma}}_{h}\in\mathcal{RT}_{h}^{-1}$, where
\begin{equation}\label{sigmatilde}
\tilde{\bm{\sigma}}_{h}|_{K}:=Q_{h}(a\nabla_{h}u_{h})-\bm{r}_{h}(f-\bm{b}\cdot\nabla_{h} u_{h}-cu_{h})(\bm{x}_{K}),
\end{equation} 
with $\bm{x}_K=(x_{K,1},x_{K,2})^T$ being the centroid of $K.$
Since $\bm{r}_{h}=O(h)$, we have 
$$\|\tilde{\bm{\sigma}}_{h}-\bm{\sigma}_{h}\|\lesssim h^{2}\|u\|_{H^{2}}.$$
and thus
$$\|a\nabla u-A_{h}\tilde{\bm{\sigma}}_{h}\|\lesssim h^{2}\|u\|_{H^{3}}.$$
$\tilde{\bm{\sigma}}_{h}$ is favorable because of lower computational cost.

\begin{remark}
Let $\widetilde{\mathcal{T}}_h$ be the refinement of $\Th$ by connecting  midpoints of opposite edges of each rectangle in $\Th.$ Let $\phi_h$ be a bilinear nodal basis function on $\widetilde{\mathcal{T}}_h$ scaled and translated such that $\phi_h$ is centered at $\bm{0}$ and $\int_{\mathbb{R}^2}\phi_h=1$. For a uniform $\Th$ and a piecewise constant $\bm{\tau}_h$ on $\Th,$ the convolution $\bm{\tau}_h*\phi_h$ coincides with $A_h\bm{\tau}_h$ at the midpoint of each interior edge in $\Th$. 

Since $\nabla_hu_h$ is not piecewise constant and $\Th$ is not uniform, the edge-based averaging $K_h$ is generally not the same as $\phi_h$-convolution at midpoints of interior edges.
For conforming finite elements, local postprocessing based on spline convolution kernels \cite{BS1977,Thomee1977} are able to produce high order superconvergence on uniform meshes, see also \cite{RyanShu2007} for similar technique in discontinuous Galerkin methods. It would be interesting to check  whether those kernels lead to superconvergence for nonconforming methods.
\end{remark}


\section{Extensions to triangular elements and higher dimensional space}\label{sec4}
In this section, we extend superconvergence analysis in Section \ref{sec3} to triangular CR elements and NCRT elements in $\mathbb{R}^{d}$ with $d\geq3$. 
\subsection{Crouzeix--Raviart elements in $\mathbb{R}^{2}$}
Based on the equivalence between mixed and nonconforming methods for Poisson's equation, a superconvergent recovery for CR elements applied to Poisson's equation has been developed in \cite{HM2016}. We generalize this result for elliptic equations with lower order terms and variable coefficients. In this subsection, let $\Th$ be a triangular mesh on $\Omega$.
The CR finite element space is
\begin{equation*}
\begin{aligned}
\mathcal{V}_{g,h}^\Delta:=&\{v_{h}\in L_2(\Omega): v_{h}|_{K}\in\text{span}\{1,x_{1},x_{2}\}\text{ for\ all}\ K\in\Th, \\
&v_{h}\text{ is\ continuous at the midpoint of each}\ E\in\Eh^{o}, \\
&\fint_{E}v_{h}=\fint_{E}g\text{ for all } E\in\Eh^{\partial}\}.\\
\end{aligned}
\end{equation*}
The CR method for \eqref{elliptic} is to find $u_{h}^\Delta\in\mathcal{V}_{g,h}^\Delta$, such that
\begin{equation*}
\ab{a\nabla_{h}u_{h}^\Delta}{\nabla_{h}v}+\ab{\bm{b}\cdot\nabla_{h}u_{h}^\Delta}{v}+\langle cu_{h}^\Delta,v\rangle=\ab{f}{v},\quad\forall v\in\mathcal{V}_{0,h}^\Delta.
\end{equation*}

The lowest order triangular RT finite element space is
\begin{equation*}
{\mathcal{RT}^\Delta_{h}}:=\{\bm{\tau}_{h}\in H(\divg,\Omega): \bm{\tau}_{h}|_{K}\in\text{span}\left\{\begin{pmatrix}1\\0\end{pmatrix},\begin{pmatrix}0\\1\end{pmatrix},\begin{pmatrix}x_{1}\\x_{2}\end{pmatrix}\right\}\text{ for all }K\in\Th\}.
\end{equation*}
It has been shown in \cite{Marini1985}  that CR and RT finite element spaces are closely related by the following lemma.
\begin{lemma}\label{triMarini}
Let $\bar{f}$ and $\bm{\tau}_{h}$ be piecewise constant functions with respect to $\Th$. Assume that
\begin{equation*}
\ab{\bm{\tau}_{h}}{\nabla_{h}v}=\ab{\bar{f}}{v}
\end{equation*}
for all $v\in\mathcal{V}_{0,h}^\Delta$. Then $\bm{\tau}_{h}-\bar{f}\bm{r}_{h}^\Delta\in{\mathcal{RT}}^\Delta_{h},$ with
$$\bm{r}_{h}^\Delta|_{K}(x_1,x_2):=\frac{1}{2}\left(x_{1}-x_{K,1}, x_{2}-x_{K,2} \right)^{T},$$
where $(x_{K,1},x_{K,2})$ is the centroid of $K$.
\end{lemma}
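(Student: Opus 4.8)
The plan is to mirror the proof of Lemma \ref{mainlemma}, now with two simplifications that are special to the piecewise-constant simplicial setting. Set $\bm{\rho}_h := \bm{\tau}_h - \bar{f}\bm{r}_h^\Delta$. Since $\bm{\tau}_h|_K$ is constant and $\bm{r}_h^\Delta|_K = \tfrac12(x_1-x_{K,1},\,x_2-x_{K,2})^T$ is a linear combination of $(1,0)^T$, $(0,1)^T$ and $(x_1,x_2)^T$, the restriction $\bm{\rho}_h|_K$ lies in the local RT shape space $\mathrm{span}\{(1,0)^T,(0,1)^T,(x_1,x_2)^T\}$ for every $K\in\Th$. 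By the triangular analogue of \eqref{equivRT} — a piecewise RT field belongs to $\mathcal{RT}_h^\Delta$ iff its normal component is single-valued across interior edges — it therefore suffices to prove $\llbracket\bm{\rho}_h\rrbracket=0$ on every $E\in\Eh^o$. Here I would first record the elementary fact that the normal component of a lowest-order RT field is constant along each edge (an edge lies on a line $\{\bm{x}\cdot\bm{n}_E=\mathrm{const}\}$); consequently $\llbracket\bm{\rho}_h\rrbracket$ is constant on each $E$, and it is enough to show that $\int_E\llbracket\bm{\rho}_h\rrbracket=0$.

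Next, for an interior edge $E$ shared by $K^+$ and $K^-$, I would test the hypothesis $\ab{\bm{\tau}_h}{\nabla_h v}=\ab{\bar{f}}{v}$ with the CR nodal basis function $v_E\in\mathcal{V}_{0,h}^\Delta$ that takes the value $1$ at the midpoint of $E$ and $0$ at all other edge midpoints, so that $\mathrm{supp}\,v_E=\omega_E$. Integrating by parts on $K^\pm$ and using that $\divg(\bm{\tau}_h|_K)=0$ automatically (constants) leaves only boundary terms; since $\bm{\tau}_h\cdot\bm{n}$ is constant on each edge and $\int_e v_E=|e|\,v_E(m_e)$ for the linear $v_E$, only the edge $E$ survives, giving $\int_E\llbracket\bm{\tau}_h\rrbracket=\ab{\bar{f}}{v_E}$. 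Writing $v_E|_K = 1-2\lambda_K$ in barycentric coordinates, where $\lambda_K$ is the coordinate of the vertex opposite $E$, and using $\int_K\lambda_K=|K|/3$ gives $\int_{K^\pm}v_E=|K^\pm|/3$, whence $\int_E\llbracket\bm{\tau}_h\rrbracket=\tfrac13\bigl(\bar{f}|_{K^+}|K^+|+\bar{f}|_{K^-}|K^-|\bigr)$.

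It then remains to compute $\int_E\llbracket\bar{f}\bm{r}_h^\Delta\rrbracket$ and match it to the quantity just found. The key geometric identity is that for a triangle $K$ with edge $E$ and outward unit normal $\bm{n}_E$, the scalar $(\bm{x}-\bm{x}_K)\cdot\bm{n}_E$ is constant on $E$ and equals the distance from the centroid to $E$, namely $2|K|/(3|E|)$; hence $\int_E(\bm{x}-\bm{x}_K)\cdot\bm{n}_E=2|K|/3$ (a cross-check: summing over the three edges recovers $\int_K\divg(\bm{x}-\bm{x}_K)=2|K|$). Multiplying by the factor $\tfrac12\bar{f}|_{K^\pm}$ and summing the two contributions yields $\int_E\llbracket\bar{f}\bm{r}_h^\Delta\rrbracket=\tfrac13\bigl(\bar{f}|_{K^+}|K^+|+\bar{f}|_{K^-}|K^-|\bigr)$, exactly the value found above. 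Therefore $\int_E\llbracket\bm{\rho}_h\rrbracket=0$, so $\llbracket\bm{\rho}_h\rrbracket\equiv0$ on each interior edge, and $\bm{\rho}_h\in\mathcal{RT}_h^\Delta$.

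The computation is short, and the only step requiring care is the geometric identity for $\int_E(\bm{x}-\bm{x}_K)\cdot\bm{n}_E$ together with keeping the orientations of $\bm{n}^+$ and $\bm{n}^-$ consistent so that the two edge integrals combine correctly into the jump $\llbracket\cdot\rrbracket$. Once the constants $|K^\pm|/3$ and $2|K^\pm|/3$ are in hand, the matching of the two jump integrals is immediate. I expect no substantive obstacle beyond these bookkeeping checks, since the argument is a faithful transcription of Lemma \ref{mainlemma} to the simplex, with the divergence-free hypothesis now free of charge.
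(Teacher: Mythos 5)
Your proof is correct: the key computations ($\int_{K^\pm}v_E=|K^\pm|/3$ for the CR edge basis function, and $\int_E(\bm{x}-\bm{x}_K)\cdot\bm{n}_E=2|K|/3$, so that both jump integrals equal $\tfrac13\bigl(\bar{f}|_{K^+}|K^+|+\bar{f}|_{K^-}|K^-|\bigr)$) check out, and the reduction to constant normal jumps is sound. The paper does not prove this lemma itself (it cites Marini), but its proof of Lemma \ref{mainlemma} for the rectangular case proceeds exactly as you do — test with the edge basis function, integrate by parts, and match the resulting edge jump against that of $\bar{f}\bm{r}_h$ — so your argument is essentially the same approach transcribed to the simplex.
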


We say $\Th$ is a uniform parallel mesh if each pair of adjacent triangles in $\Th$ forms a parallelogram. A supercloseness estimate follows from Lemma \ref{triMarini}, a supercloseness estimate for triangular RT elements in \cite{YL2018,HuMa2018}, and the same procedure in Section \ref{sec2}. By abuse of notation, $\Pi_h$ denotes the canonical interpolation onto $\mathcal{RT}_h^\Delta.$
\begin{theorem}\label{supercloseCR}
Let $\Th$ be a uniform parallel mesh. Let $$\bm{\sigma}_{h}^\Delta:=\bar{a}\nabla_{h}u_{h}^\Delta-\bm{r}_{h}^\Delta P_{h}(f-cu_{h}^\Delta-\bm{b}\cdot\nabla_{h}u_{h}^\Delta),$$ 
where $\bar{a}|_{K}=\fint_{K}a$ for $K\in\Th$.
It holds that
\begin{equation*}
\|\Pi_{h}\bm{\sigma}-\bm{\sigma}_{h}^\Delta\|\lesssim
h^{2}|\log h|^{\frac{1}{2}}\|u\|_{W^3_\infty}.
\end{equation*}
\end{theorem}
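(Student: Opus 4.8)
The plan is to follow the proof of Theorem~\ref{superclose} essentially line by line, substituting the triangular ingredients. First I would introduce the auxiliary CR method, the triangular analogue of \eqref{auxnc}: find $\bar u_h^\Delta\in\mathcal{V}_{g,h}^\Delta$ with $\ab{a\nabla_h\bar u_h^\Delta}{\nabla_h v}=\ab{P_h(f-cu-\bm b\cdot\nabla u)}{v}$ for all $v\in\mathcal{V}_{0,h}^\Delta$. The CR analogue of Lemma~\ref{superuubar}, namely $\|\nabla_h(u_h^\Delta-\bar u_h^\Delta)\|\lesssim h^2\|u\|_{H^2}$, then follows verbatim: subtracting the two variational problems, integrating by parts elementwise, choosing the edgewise constants $\bm c_E=\bm d_E=\bm b(m_E)\fint_E v$, and invoking the trace inequality \eqref{trace}, the a priori estimate \eqref{apriori}, and the single-valued edge means of CR functions. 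This is the ``same procedure in Section~\ref{sec2}'' alluded to above.

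Next I would define the $H(\divg)$-conforming auxiliary flux $\bar{\bm\sigma}_h^\Delta:=\bar a\nabla_h\bar u_h^\Delta-\bm r_h^\Delta P_h(f-cu-\bm b\cdot\nabla u)$. Because $\bar u_h^\Delta$ is piecewise linear on triangles, $\bar a\nabla_h\bar u_h^\Delta$ is piecewise constant and elementwise divergence-free, so Lemma~\ref{triMarini} yields $\bar{\bm\sigma}_h^\Delta\in\mathcal{RT}_h^\Delta\subset H(\divg,\Omega)$. Setting $\bm\tau_h:=\Pi_h\bm\sigma-\bar{\bm\sigma}_h^\Delta$ and combining \eqref{commuting} with $\nabla_h\cdot\bm r_h^\Delta=1$ gives $\nabla\cdot\bm\tau_h=0$, so $\bm\tau_h$ is piecewise constant. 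Since $\bm r_h^\Delta$ has zero mean on each triangle, $\int_K\bm r_h^\Delta\cdot\bm\tau_h=0$, whence $\|\Pi_h\bm\sigma-\bar{\bm\sigma}_h^\Delta\|^2=I+II$ with $I=\ab{\Pi_h\bm\sigma-\bm\sigma}{\bm\tau_h}$ and $II=\ab{a\nabla u-\bar a\nabla_h\bar u_h^\Delta}{\bm\tau_h}$.

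For part $I$ I would invoke the triangular RT supercloseness estimate of \cite{YL2018,HuMa2018}, valid on uniform parallel meshes, which bounds $|I|\lesssim h^2|\log h|^{1/2}\|u\|_{W^3_\infty}\|\bm\tau_h\|$; this replaces the Dur\'an estimate \eqref{bdI} and is where the logarithmic factor and the $W^3_\infty$ norm enter. For part $II$ I would split $a\nabla u-\bar a\nabla_h\bar u_h^\Delta=\bar a\nabla_h(u-\bar u_h^\Delta)+(a-\bar a)\nabla u$. Testing the second piece against the piecewise constant $\bm\tau_h$ and using $\fint_K(a-\bar a)=0$ lets me subtract $\fint_K\nabla u$, giving $\ab{(a-\bar a)\nabla u}{\bm\tau_h}\lesssim h^2\|u\|_{H^2}\|\bm\tau_h\|$. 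The first piece is handled exactly as part $II$ of Theorem~\ref{superclose}: integrating by parts, using $\lr{\bm\tau_h}=0$, subtracting $\fint_E a$ on each edge (so that the constant part drops via $\fint_E\lr{u-\bar u_h^\Delta}=\bm 0$), and applying the trace inequality together with the CR supercloseness of the first paragraph, again yielding $O(h^2)\|u\|_{H^2}\|\bm\tau_h\|$. Finally $\|\bm\sigma_h^\Delta-\bar{\bm\sigma}_h^\Delta\|\lesssim h^2\|u\|_{H^2}$ follows from the CR supercloseness lemma and $\bm r_h^\Delta=O(h)$; the triangle inequality and $\|u\|_{H^2}\lesssim\|u\|_{W^3_\infty}$ then give the claim.

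The main obstacle is part $I$. Unlike the rectangular case, where Dur\'an's clean $O(h^2)$ estimate \eqref{bdI} is available, triangular RT interpolation superconvergence is genuinely more delicate and holds only on uniform parallel meshes; I would import it wholesale from \cite{YL2018,HuMa2018} rather than reprove it. Its proof forces both the $|\log h|^{1/2}$ loss and the maximum-norm regularity $\|u\|_{W^3_\infty}$, which is precisely why the resulting bound is weaker than in Theorem~\ref{superclose}.
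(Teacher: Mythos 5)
Your proposal is correct and follows essentially the same route as the paper's proof: the auxiliary CR problem and supercloseness of $\bar u_h^\Delta$, the Marini-type Lemma \ref{triMarini} giving an $H(\divg)$-conforming divergence-free $\bm{\tau}_h$, the orthogonality $\int_K\bm{r}_h^\Delta\cdot\bm{\tau}_h=0$ and the splitting into $I+II$, the imported superconvergence bound of \cite{YL2018} for part $I$ (the source of the $|\log h|^{1/2}$ factor and the $W^3_\infty$ norm), and trace-inequality plus supercloseness arguments for part $II$. The only step the paper makes explicit that you leave implicit is that the estimate in \cite{YL2018} is stated for test fields of the form $\nabla^\perp w_h$ with $w_h$ continuous piecewise linear, so one must note that your divergence-free $\bm{\tau}_h\in\mathcal{RT}_h^\Delta$ admits exactly such a representation (a standard discrete exact-sequence fact on the simply connected $\Omega$), which you effectively have from $\nabla\cdot\bm{\tau}_h=0$.
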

\begin{proof}
We use similar notations and proceed as in the proof of Theorem \ref{superclose}. Let $\bm{\tau}_{h}=\Pi_{h}\bm{\sigma}-\bar{\bm{\sigma}}_{h}^\Delta$, where $\bar{\bm{\sigma}}_{h}^\Delta=\bar{a}\nabla_{h}\bar{u}_{h}^\Delta-\bm{r}_{h}^\Delta P_{h}(f-cu-\bm{b}\cdot\nabla u)$ and $\bar{u}_h^\Delta$ is the solution to the auxiliary problem \eqref{auxnc} with $\mathcal{V}_{0,h}^\Delta$ replacing $\mathcal{V}_{0,h}$.

It then follows from Lemma \ref{triMarini} that $\bm{\tau}_{h}\in\mathcal{RT}_h^\Delta$ with $\nabla\cdot\bm{\tau}_{h}=0$. Hence $\bm{\tau}_{h}=\nabla^{\perp}w_{h}$ for some continuous piecewise linear function $w_{h}$, where $\nabla^{\perp}=(-\partial_{x_{2}},\partial_{x_{1}})^{T}$. The bound \eqref{bdI} for part $I$ is replaced by 
$$|\ab{\bm{\sigma}-\Pi_{h}\bm{\sigma}}{\nabla^\perp w_{h}}|\lesssim h^{2}|\log h|^{\frac{1}{2}}\|\bm{\sigma}\|_{W^2_\infty}\|\nabla^{\perp}w_{h}\|,$$
which is proved in \cite{YL2018}. The rest of the proof is the same as Theorem \ref{superclose}.
\qed\end{proof}

For the recovery purpose, let \begin{equation*}
\begin{aligned}
\mathcal{V}_{h}^\Delta:=&\{v_{h}\in L_2(\Omega): v_{h}|_{K}\in\text{span}\{1,x_{1},x_{2}\}\text{ for\ all}\ K\in\Th, \\
&v_{h}\text{ is\ continuous at the midpoint of each}\ E\in\Eh^{o}\}.
\end{aligned}
\end{equation*}
Then we consider the postprocessing operator $K_{h}$ defined in \cite{Brandts1994}, see also \cite{Duran1991}. 
\begin{definition}
Let $\bm{\tau}_{h}$ be a piecewise constant function.
\begin{enumerate} 
\item For each $E\in\Eh^{o}$, let $m$ be the midpoint of $E$. Let $K^{+}$ and $K^{-}$ be the two rectangles sharing $E$ as an edge. Define
\begin{equation*}
(K_{h}\bm{\tau}_{h})(m):=\frac{1}{2}\bm{\tau}_{h}|_{K^{+}}(m)+\frac{1}{2}\bm{\tau}_{h}|_{K^{-}}(m).
\end{equation*}
\item For each $E\in\Eh^{\partial}$, let $m$ denote the midpoint of $E$ and $K$ the element having $E$ as an edge. Let $E'$ be another edge of $K$ with midpoint $m'$. Let $K'$ be the other element having $E'$ as an edge and $m''$ the midpoint of the edge of $K'$ that is parallel to $E$. Define
\begin{equation*}
(K_{h}\bm{\tau}_{h})(m):=2(K_{h}\bm{\tau}_{h})(m')-(K_{h}\bm{\tau}_{h})(m'').
\end{equation*}
\end{enumerate}
Then $K_{h}\bm{\tau}_{h}$ is the unique element in $\mathcal{V}_{h}^\Delta$ whose midpoint values are specified in the above two steps. 
\end{definition}

Based on Theorem \ref{supercloseCR}, we obtain the superconvergent recovery for the CR element.
\begin{theorem}
Let $\Th$ be a uniform parallel mesh. Then
\begin{equation*}
\|a\nabla u-K_{h}(\bar{a}\nabla_{h}u_{h}^\Delta)\|\lesssim
h^{2}|\log h|^{\frac{1}{2}}\|u\|_{W^3_\infty}.
\end{equation*}
\end{theorem}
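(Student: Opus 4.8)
The plan is to mimic the proof of Theorem~\ref{superconvergence}: combine the supercloseness of Theorem~\ref{supercloseCR} with the stability and super-approximation of the Dur\'an--Brandts operator $K_h$ from \cite{Brandts1994,Duran1991} through a triangle inequality. On a uniform parallel mesh the operator $K_h$ enjoys the triangular analogue of \eqref{Kh} and Theorem~\ref{superapprox}: the stability bound $\|K_h\bm{\tau}_h\|\lesssim\|\bm{\tau}_h\|$ — which, reading $K_h$ through its edge-midpoint values and invoking an inverse inequality, extends to any piecewise linear argument — together with the super-approximation estimate $\|\bm{\sigma}-K_h\Pi_h\bm{\sigma}\|\lesssim h^2|\bm{\sigma}|_{H^2}\lesssim h^2\|u\|_{W^3_\infty}$. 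Writing $\bm{\sigma}=a\nabla u$, I would start from
\[
\|a\nabla u-K_h(\bar a\nabla_h u_h^\Delta)\|\le\|\bm{\sigma}-K_h\Pi_h\bm{\sigma}\|+\|K_h(\Pi_h\bm{\sigma}-\bar a\nabla_h u_h^\Delta)\|,
\]
so that the first term is already $\lesssim h^2\|u\|_{W^3_\infty}$ by super-approximation.

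For the second term I would reintroduce the corrected flux $\bm{\sigma}_h^\Delta=\bar a\nabla_h u_h^\Delta-\bm{r}_h^\Delta P_h(f-cu_h^\Delta-\bm{b}\cdot\nabla_h u_h^\Delta)$ and use linearity of $K_h$ to split
\[
K_h(\Pi_h\bm{\sigma}-\bar a\nabla_h u_h^\Delta)=K_h(\Pi_h\bm{\sigma}-\bm{\sigma}_h^\Delta)-K_h\big(\bm{r}_h^\Delta P_h(f-cu_h^\Delta-\bm{b}\cdot\nabla_h u_h^\Delta)\big).
\]
The first piece is controlled by the extended stability of $K_h$ and Theorem~\ref{supercloseCR}, giving $\|K_h(\Pi_h\bm{\sigma}-\bm{\sigma}_h^\Delta)\|\lesssim\|\Pi_h\bm{\sigma}-\bm{\sigma}_h^\Delta\|\lesssim h^2|\log h|^{\frac12}\|u\|_{W^3_\infty}$. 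It remains to show that the discarded correction $K_h(\bm{r}_h^\Delta P_h(\cdots))$ is of order $h^2$; this is the crux, and it is where the uniform parallel structure is essential.

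The key observation is a cancellation at interior edge midpoints. For $E\in\Eh^o$ shared by $K^+,K^-$, on a uniform parallel mesh $K^+\cup K^-$ is a parallelogram whose center is the midpoint $m$ of $E$; hence the centroids $\bm{x}_{K^+},\bm{x}_{K^-}$ are symmetric about $m$, so that $\bm{r}_h^\Delta|_{K^+}(m)=-\bm{r}_h^\Delta|_{K^-}(m)$. Since $K_h$ uses equal weights $\tfrac12,\tfrac12$ at interior edges, the midpoint value of $K_h(\bm{r}_h^\Delta g)$, with $g:=P_h(f-cu_h^\Delta-\bm{b}\cdot\nabla_h u_h^\Delta)$, collapses to $\tfrac12\,\bm{r}_h^\Delta|_{K^+}(m)\,(g|_{K^+}-g|_{K^-})$, a product of two $O(h)$ factors: $|\bm{r}_h^\Delta(m)|\lesssim h$ and the adjacent-cell data difference. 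Controlling $\sum_{E\in\Eh^o}|g|_{K^+}-g|_{K^-}|^2$ by a discrete $H^1$-seminorm (the smooth part $f-cu-\bm{b}\cdot\nabla u$ contributes $O(1)$, while the error terms in $u-u_h^\Delta$ are absorbed exactly as in Lemma~\ref{superuubar} and Theorem~\ref{supercloseCR}) and using $\|\phi_E\|_{L^2}\lesssim h$ for the midpoint basis functions, summing squares over the $O(h^{-2})$ interior edges yields $\|K_h(\bm{r}_h^\Delta g)\|\lesssim h^2$. The $O(h^{-1})$ boundary midpoints, produced by the bounded extrapolation rule in the definition of $K_h$, inherit the same $O(h^2)$ size and contribute at an even higher order, so they do not affect the rate. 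Collecting the three bounds gives the claim.

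The main obstacle is precisely this last cancellation estimate. Unlike the rectangular operator $A_h$, the classical $K_h$ accepts only the piecewise-constant datum $\bar a\nabla_h u_h^\Delta$, so the $O(h)$ correction $\bm{r}_h^\Delta P_h(\cdots)$ carried by $\bm{\sigma}_h^\Delta$ must be dropped, and one must verify that doing so costs only $O(h^2)$. This would fail on general meshes: it hinges on the symmetry of adjacent centroids about the shared-edge midpoint, which holds exactly on uniform parallel meshes — the same structural hypothesis that makes the Dur\'an--Brandts super-approximation and Theorem~\ref{supercloseCR} available in the first place.
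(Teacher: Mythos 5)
Your proposal is correct and takes essentially the same route as the paper's proof: a triangle inequality combining Theorem \ref{supercloseCR} with Brandts' stability and super-approximation of $K_h$, where the crux --- that the dropped correction $K_h\bigl(\bm{r}_h^\Delta P_h(\cdots)\bigr)$ costs only $O(h^2)$ --- rests on exactly the same cancellation $(K_h\bm{r}_h^\Delta)(m)=0$ at interior edge midpoints, coming from centroid symmetry on uniform parallel meshes. The only cosmetic difference is bookkeeping for that correction term: the paper first swaps the discrete data $P_h(f-cu_h^\Delta-\bm{b}\cdot\nabla_h u_h^\Delta)$ for $P_h p$ with $p=f-cu-\bm{b}\cdot\nabla u$ smooth (an $O(h^2)$ change by the a priori estimate) and then argues pointwise in $L_\infty$, whereas you keep the discrete data and control the adjacent-cell jumps in a discrete $\ell^2$ fashion.
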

\begin{proof}
The operator $K_{h}$ is known to satisfy Theorem \ref{superapprox} with $K_{h}$ replacing $A_{h}$, see \cite{Brandts1994}.
It then follows from Theorem \ref{supercloseCR} and the same argument in the proof of Theorem \ref{superconvergence} that
\begin{equation}\label{super1}
\|a\nabla u-K_{h}\bm{\sigma}_{h}^\Delta\|\lesssim h^{2}|\log h|^{\frac{1}{2}}\|u\|_{W^3_\infty}.
\end{equation}
Let $p=f-cu-\bm{b}\cdot\nabla u$ and $\tilde{\bm{\sigma}}_{h}^\Delta:=\bar{a}\nabla_{h}u^\Delta_{h}-\bm{r}^\Delta_{h}P_{h}p$. It follows from $\|\bm{r}_h\|_{L_\infty}=O(h)$ and \eqref{apriori} that
\begin{equation}\label{sigmabarCR}
\|\bm{\sigma}_{h}^\Delta-\tilde{\bm{\sigma}}_{h}^\Delta\|\lesssim h^{2}\|u\|_{H^{2}}.
\end{equation}
Let $m$ be the midpoint of any $E\in\Eh^{o}$. We have
\begin{equation*}
\begin{aligned}
&[(K_{h}(\bm{r}_{h}^\Delta P_{h}p)](m)=[K_{h}(\bm{r}_{h}^\Delta p)](m)+[K_{h}(\bm{r}_{h}^\Delta(P_{h}p-p))](m)\\
&\quad=(K_{h}\bm{r}_{h}^\Delta)(m)p(m)+O(h^{2})\|u\|_{W^2_\infty}=O(h^{2})\|u\|_{W^2_\infty}.
\end{aligned}
\end{equation*}
In the last equality, we use  $(K_{h}\bm{r}_{h}^\Delta)(m)=0$.
Similar argument works for $E\in\Eh^{\partial}$.
Hence
\begin{equation}\label{hot}
\|K_{h}(\bm{r}_{h}^\Delta P_{h}p)\|\lesssim\|K_{h}(\bm{r}_{h}^\Delta P_{h}p)\|_{L_{\infty}}\lesssim h^{2}\|u\|_{W^2_\infty}.
\end{equation}
Combining \eqref{super1}-\eqref{hot} and the triangle inequality
\begin{equation*}
\begin{aligned}
&\|a\nabla u-K_{h}(\bar{a}\nabla u_{h}^\Delta)\|\leq\|a\nabla u-K_{h}\bm{\sigma}_{h}^\Delta\|\\
&\quad+\|K_{h}(\bm{\sigma}_{h}^\Delta-\tilde{\bm{\sigma}}_{h}^\Delta)\|+\|K_{h}(\bm{r}_{h}^\Delta P_{h}p)\|
\end{aligned}
\end{equation*}
completes the proof.
\qed\end{proof}

It is noted that $K_{h}$ superconverges on mildly structured meshes, see, e.g., \cite{YL2018}. For superconvergence results on  mildly perturbed uniform  triangular grids, readers are also referred to \cite{LMW2000,BX2003a,XZ2003,BaLi2019,DuZhang2019} and references therein. A disadvantage of $K_{h}$ is that it outputs a nonconforming function which is sometimes undesirable. For a vertex $z$ in $\Th$, let $\omega_{z}$ be the patch which is the union of triangles surrounding $z$. Define
$$\widetilde{K}_{h}(\bar{a}\nabla_{h}u_{h}^\Delta)(z):=\sum_{K\subset\omega_{z}}\frac{|K|}{|\omega_{z}|}\bar{a}\nabla_{h}u_{h}^\Delta|_{K}.$$
We then obtain a nodal averaging procedure $\widetilde{K}_{h}$ and a continuous piecewise linear function $\widetilde{K}_{h}(\bar{a}\nabla_{h}u_{h}^\Delta)$. Following similar argument in this section, it is straightforward to show
$$\|a\nabla u-\widetilde{K}_{h}(\bar{a}\nabla_{h}u_{h}^\Delta)\|\lesssim h^{\frac{3}{2}}\|u\|_{H^{3}},$$
provided $\Th$ is uniformly parallel.

\subsection{Rannacher--Turek elements in $\mathbb{R}^{d}$}
Let $\Omega=\Pi_{j=1}^d[\omega_{j,1},\omega_{j,2}]\subset\mathbb{R}^{d}$ be a hypercube where $d\geq3$ is an integer. We assume that $a, \bm{b}, c, f, g$ in \eqref{elliptic} are functions in $\bm{x}=(x_{1},\ldots,x_{d})^T\in\Omega$. Let $\Th$ be a cubical mesh of $\Omega$, where each element $K$ in $\Th$ is of the form 
$$K=\Pi_{j=1}^d[x_{j,i_j},x_{j,i_j+1}]=[x_{1,i_1},x_{1,i_1+1}]\times[x_{2,i_2},x_{2,i_2+1}]\times\cdots[x_{d,i_d},x_{d,i_d+1}]$$
with $i_1,\ldots,i_d\in\mathbb{Z}^+.$ Let $\mathcal{F}_h$, $\mathcal{F}_h^{o}$, and $\mathcal{F}_h^{\partial}$ denote the set of faces, interior faces, and boundary faces, respectively. The NCRT element space in $\mathbb{R}^{d}$ is
\begin{equation*}
\begin{aligned}
\mathcal{V}_{g,h}^{(d)}:=&\{v\in L_2(\Omega): v|_{K}\in\text{span}\{1,x_{1},\ldots,x_{d},x_{1}^{2}-x_{2}^{2},\ldots,x_{1}^{2}-x_{d}^{2}\}\\
&\text{ for\ all}\ K\in\Th, ~\fint_{F}v\text{ is\ single-valued\ for\ all}\ F\in\mathcal{F}_h^{o}, \\
&\fint_{F}v=\fint_{F}g\text{ at the centroid of each } F\in\mathcal{F}_h^{\partial}\},\\
\end{aligned}
\end{equation*}
where $\fint_{F}v:=\frac{1}{|F|}\int_Fv$ is the surface mean of $v$ on $F$.  
The NCRT method for \eqref{elliptic} in $\mathbb{R}^{d}$ is to find $u_{h}^{(d)}\in\mathcal{V}_{g,h}^{(d)}$, such that
\begin{equation}\label{RT3}
\ab{a\nabla_{h}u_{h}^{(d)}}{\nabla_{h}v}+\ab{\bm{b}\cdot\nabla_{h}u_{h}^{(d)}}{v}+\langle cu_{h}^{(d)},v\rangle=\ab{f}{v},\quad\forall v\in\mathcal{V}_{0,h}^{(d)}.
\end{equation}
Let $Q_{1}^{(j)}(K)$ {\color{red}be} the space of polynomials on $K$ that are linear in $x_{j}$ and constant in $x_{i}$ for $i\neq j$. Let
\begin{equation*}
\mathcal{RT}_{h}^{(d)}:=\{\bm{\tau}_{h}\in H(\divg,\Omega): \bm{\tau}_{h}|_{K}\in \Pi_{j=1}^{d}Q_{1}^{(j)}(K)\text{ for all }K\in\Th\}.
\end{equation*}
The $H(\divg)$-space in $\mathbb{R}^{d}$ is $H(\divg;\Omega)=\{\bm{\tau}\in\Pi_{j=1}^{d}L_2(\Omega): \nabla\cdot\bm{\tau}\in L_2(\Omega)\}$. The next lemma is a direct genearlization of Lemma \ref{mainlemma}. The proof follows from direct (but tedious) calculation.
\begin{lemma}\label{lemma2}
Let $\bar{f}$ be a piecewise constant, $\bm{\tau}_{h}|_{K}\in\Pi_{j=1}^{d}Q_{1}^{(j)}(K)$ and $\nabla\cdot(\bm{\tau}_{h}|_{K})=0$ for all $K\in\Th$. Assume that
\begin{equation*}
\ab{\bm{\tau}_{h}}{\nabla_{h}v}=\ab{\bar{f}}{v}
\end{equation*}
for all $v\in\mathcal{V}_{0,h}^{(d)}$. Then $\bm{\tau}_{h}-\bar{f}\bm{r}_{h}^{(d)}\in\mathcal{RT}_{h}^{(d)},$ 
with
\begin{align*}
    &\bm{r}_{h}^{(d)}|_{K}(x_1,x_2,\ldots,x_d)\cdot\bm{e}_{i}\\
    &\quad:=\ell_{K,1}^{2}\ldots\widehat{\ell_{K,i}^{2}}\ldots\ell_{K,d}^{2}(x_{i}-x_{K,i})/\sum_{j=1}^{d}\ell_{K,1}^{2}\ldots\widehat{\ell_{K,j}^{2}}\ldots\ell_{K,d}^{2}
\end{align*}
for $1\leq i\leq d$,
where $\bm{e}_{i}$ is the $i$-th unit vector, $\widehat{\cdot}$ means the variable below is suppressed,  $K=\Pi_{j=1}^{d}[x_{j,i_j},x_{j,i_j+1}]$, $\ell_{K,j}=x_{j,i_j+1}-x_{j,i_j}$, and $(x_{K,1},\ldots,x_{K,d})$ is the centroid of $K$.
\end{lemma}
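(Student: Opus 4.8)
The plan is to mimic the proof of Lemma~\ref{mainlemma}: I will show that the normal jump of $\bm{\tau}_{h}-\bar{f}\bm{r}_{h}^{(d)}$ vanishes across every interior face, and then invoke the $d$-dimensional analogue of the characterization \eqref{equivRT}. (As a preliminary one also records $\nabla\cdot\bm{r}_{h}^{(d)}=1$ on each $K$, since the derivative of the $i$-th component is the constant $\prod_{j\neq i}\ell_{K,j}^{2}\big/\sum_{k}\prod_{j\neq k}\ell_{K,j}^{2}$ and these sum to $1$; this is needed when the lemma is applied, though not inside the proof itself.) Fix an interior face $F\in\mathcal{F}_h^{o}$ with unit normal $\bm{e}_{i}$, shared by cubes $K^{+}$ and $K^{-}$, and let $v_{F}\in\mathcal{V}_{0,h}^{(d)}$ be the basis function with $\fint_{F}v_{F}=1$ and $\fint_{F'}v_{F}=0$ for every other face $F'$. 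Since the $i$-th component of $\bm{\tau}_{h}|_{K}$ lies in $Q_{1}^{(i)}(K)$ it is constant on $F$; more generally $\bm{\tau}_{h}\cdot\bm{n}$ is constant on each face. Testing \eqref{var} with $v=v_{F}$, using $\nabla_{h}\cdot\bm{\tau}_{h}=0$ together with elementwise integration by parts, then yields exactly as in \eqref{inter} the identity $\int_{F}\llbracket\bm{\tau}_{h}\rrbracket=\int_{K^{+}\cup K^{-}}\bar{f}v_{F}$.

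The crux — and the only genuinely tedious point — is the evaluation of $\int_{K^{\pm}}v_{F}$. Rather than writing the NCRT shape functions explicitly, I would argue by duality: the functional $v\mapsto\int_{K}v$ restricted to the local NCRT space is a linear combination of the face-average dofs, and the coefficient multiplying the dof attached to a face perpendicular to $\bm{e}_{i}$ is precisely $\int_{K}v_{F}|_{K}=:c_{i}$. Reflection symmetry in each coordinate collapses the $2d$ coefficients to the $d$ numbers $c_{1},\dots,c_{d}$. Testing the expansion $\int_{K}v=\sum_{F'}c_{F'}\fint_{F'}v$ against $v=1$ gives $\sum_{i}c_{i}=|K|/2$, while testing against the $d-1$ local basis functions $x_{1}^{2}-x_{k}^{2}$ gives that $c_{i}\ell_{K,i}^{2}$ is independent of $i$; solving the two together yields $c_{i}=|K|\prod_{j\neq i}\ell_{K,j}^{2}\big/\big(2\sum_{k}\prod_{j\neq k}\ell_{K,j}^{2}\big)$. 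On the other hand, because $x_{i}-x_{K,i}=\pm\ell_{K,i}/2$ on $F$ and $|F|=\prod_{j\neq i}\ell_{K,j}$, a one-line computation from the definition of $\bm{r}_{h}^{(d)}$ gives $\int_{F}\bm{r}_{h}^{(d)}|_{K^{\pm}}\cdot\bm{n}^{\pm}=c_{i}=\int_{K^{\pm}}v_{F}$. Hence $\int_{F}\bar{f}\llbracket\bm{r}_{h}^{(d)}\rrbracket=\int_{K^{+}\cup K^{-}}\bar{f}v_{F}=\int_{F}\llbracket\bm{\tau}_{h}\rrbracket$.

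Finally, both $\bm{\tau}_{h}\cdot\bm{n}$ and $\bm{r}_{h}^{(d)}\cdot\bm{n}$ are constant on $F$ (the latter because $\bm{r}_{h}^{(d)}\cdot\bm{e}_{i}$ is affine in $x_{i}$ and constant in the remaining variables, and $x_{i}$ is frozen on $F$), so the equality of integrals just established forces $\llbracket\bm{\tau}_{h}-\bar{f}\bm{r}_{h}^{(d)}\rrbracket=0$ pointwise on $F$. Since $F$ was an arbitrary interior face and $(\bm{\tau}_{h}-\bar{f}\bm{r}_{h}^{(d)})|_{K}\in\prod_{j}Q_{1}^{(j)}(K)$, the $d$-dimensional analogue of \eqref{equivRT} yields $\bm{\tau}_{h}-\bar{f}\bm{r}_{h}^{(d)}\in\mathcal{RT}_{h}^{(d)}$, as claimed.

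The main obstacle is the shape-function integral of the second paragraph: the symmetry-plus-duality device is what keeps it short, and it also explains the precise weights appearing in $\bm{r}_{h}^{(d)}$. The naive alternative — inverting the $2d\times 2d$ dof matrix to produce $v_{F}$ in closed form and then integrating — is exactly the ``direct but tedious calculation'' the statement alludes to, and I would avoid it.
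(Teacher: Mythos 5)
Your proof is correct, and its skeleton is exactly the one the paper intends: the face-by-face jump argument of Lemma \ref{mainlemma}, ending with the $d$-dimensional analogue of \eqref{equivRT}, where your identity $\int_{F}\llbracket\bm{\tau}_{h}\rrbracket=\int_{K^{+}\cup K^{-}}\bar{f}v_{F}$ is the verbatim generalization of \eqref{inter}. Where you genuinely depart from the paper is the one step it leaves unwritten: the evaluation of $\int_{K^{\pm}}v_{F}$. The paper obtains this by explicit computation with the shape functions --- in two dimensions this is formula \eqref{intvE}, and in $d$ dimensions it simply declares the computation ``direct (but tedious)'' --- whereas you never construct $v_{F}$ at all: you expand the functional $v\mapsto\int_{K}v$ on the local NCRT space in the dof basis, reduce the $2d$ coefficients to $d$ numbers $c_{1},\dots,c_{d}$ by reflection symmetry (legitimate, since the local space, the functional, and the dof set are all invariant under the coordinate reflections about the centroid), and pin them down by testing against $1$ and $x_{1}^{2}-x_{k}^{2}$. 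Both conditions you assert do hold: the first test gives $\sum_{i}c_{i}=|K|/2$, and in the second the centroid terms cancel precisely because of the first condition, leaving $c_{1}\ell_{K,1}^{2}=c_{k}\ell_{K,k}^{2}$; solving gives your
\begin{equation*}
c_{i}=\frac{|K|\prod_{j\neq i}\ell_{K,j}^{2}}{2\sum_{k}\prod_{j\neq k}\ell_{K,j}^{2}},
\end{equation*}
which indeed reduces to \eqref{intvE} when $d=2$ and matches $\int_{F}\bm{r}_{h}^{(d)}|_{K^{\pm}}\cdot\bm{n}^{\pm}$ with the correct (positive) sign on both sides of $F$, so the constant jump $\llbracket\bm{\tau}_{h}-\bar{f}\bm{r}_{h}^{(d)}\rrbracket$ has zero mean and hence vanishes. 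What your duality-plus-symmetry device buys is a dimension-independent derivation that also explains where the weights in $\bm{r}_{h}^{(d)}$ come from; what the paper's route buys is that it needs no structural observation, at the price of inverting the $2d\times 2d$ dof system element by element. Both are complete; yours is the cleaner realization of the paper's one-sentence instruction.
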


Given $\bm{\tau}\in\Pi_{j=1}^dH^1(\Omega)$, the $d$-dimensional RT interpolant $\Pi_{h}^{(d)}\bm{\tau}\in\mathcal{RT}^{(d)}_{h}$ is determined by 
\begin{equation}\label{RTinterpolation3}
    \int_F(\Pi^{(d)}_{h}\bm{\tau})\cdot\bm{n}_F=\int_F\bm{\tau}\cdot\bm{n}_F,\quad\forall F\in\mathcal{F}_h,
\end{equation}
where $\bm{n}_F$ is a unit normal to $F$.
By Lemma \ref{lemma2} and following exactly the same procedure in Section \ref{sec3}, we obtain a supercloseness estimate in $\mathbb{R}^{d}$.
\begin{theorem}\label{superclose3d} 
Let $Q_{h}^{(d)}$ be the $L_2$-projection onto $\nabla_{h}\mathcal{V}_{0,h}^{(d)}$ and 
\begin{equation*}
    \bm{\sigma}_{h}^{(d)}:=Q_{h}^{(d)}(a\nabla_{h}u_{h}^{(d)})-\bm{r}_{h}^{(d)}P_{h}(f-cu_{h}^{(d)}-\bm{b}\cdot\nabla_{h}u_{h}^{(d)}).
\end{equation*}
It holds that
\begin{equation*}
\|\Pi_{h}^{(d)}(a\nabla u)-\bm{\sigma}_{h}^{(d)}\|\lesssim
h^{2}\|u\|_{H^{3}}.
\end{equation*}
\end{theorem}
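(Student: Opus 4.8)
The plan is to mirror the proof of Theorem~\ref{superclose} line by line, replacing edges by faces and the planar field $\bm{r}_{h}$ by the $d$-dimensional field $\bm{r}_{h}^{(d)}$ of Lemma~\ref{lemma2}. First I would introduce the auxiliary cubical NCRT problem: find $\bar{u}_{h}^{(d)}\in\mathcal{V}_{g,h}^{(d)}$ with $\ab{a\nabla_{h}\bar{u}_{h}^{(d)}}{\nabla_{h}v}=\ab{P_{h}(f-cu-\bm{b}\cdot\nabla u)}{v}$ for all $v\in\mathcal{V}_{0,h}^{(d)}$, and establish the $d$-dimensional analogue of Lemma~\ref{superuubar}, namely $\|\nabla_{h}(u_{h}^{(d)}-\bar{u}_{h}^{(d)})\|\lesssim h^{2}\|u\|_{H^{2}}$. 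That proof transcribes verbatim: integration by parts now produces face integrals, the product rule \eqref{product} and the trace inequality \eqref{trace} hold with the edge patch replaced by the face patch, and $\fint_{F}\llbracket u-u_{h}^{(d)}\rrbracket=\bm{0}$ supplies the required cancellation of the lower-order term $\ab{\bm{b}\cdot\nabla_{h}(u-u_{h}^{(d)})}{v}$.

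Next I would set $\bar{\bm{\sigma}}_{h}^{(d)}:=Q_{h}^{(d)}(a\nabla_{h}\bar{u}_{h}^{(d)})-\bm{r}_{h}^{(d)}P_{h}(f-cu-\bm{b}\cdot\nabla u)$. Since every element of $\nabla_{h}\mathcal{V}_{0,h}^{(d)}$ is element-wise divergence-free, $\nabla_{h}\cdot Q_{h}^{(d)}=0$, and together with the fact that $\bar{u}_{h}^{(d)}$ solves the auxiliary problem, Lemma~\ref{lemma2} yields $\bar{\bm{\sigma}}_{h}^{(d)}\in\mathcal{RT}_{h}^{(d)}\subset H(\divg,\Omega)$. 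Writing $\bm{\tau}_{h}:=\Pi_{h}^{(d)}(a\nabla u)-\bar{\bm{\sigma}}_{h}^{(d)}$ and using the commuting property \eqref{commuting} with $\nabla_{h}\cdot\bm{r}_{h}^{(d)}=1$ (which follows because the weights $c_{i}$ defining $\bm{r}_{h}^{(d)}$ satisfy $\sum_{i}c_{i}=1$), one gets $\nabla\cdot\bm{\tau}_{h}=P_{h}\nabla\cdot(a\nabla u)-P_{h}(f-cu-\bm{b}\cdot\nabla u)=0$. Consequently $\bm{\tau}_{h}\cdot\bm{e}_{i}=a_{i}(x_{i}-x_{K,i})+\mathrm{const}$ on each $K$ with $\sum_{i}a_{i}=0$, so $\bm{\tau}_{h}|_{K}$ is the gradient of a function in the local NCRT$^{(d)}$ space and in fact $\bm{\tau}_{h}\in\nabla_{h}\mathcal{V}_{0,h}^{(d)}$. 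The identity that must be checked is the orthogonality $\int_{K}\bm{r}_{h}^{(d)}\cdot\bm{\tau}_{h}=0$: dropping the centroid constants and using $\int_{K}(x_{i}-x_{K,i})^{2}=|K|\ell_{K,i}^{2}/12$, the products $c_{i}\ell_{K,i}^{2}$ are all equal to $\prod_{k}\ell_{K,k}^{2}/\sum_{j}\prod_{k\neq j}\ell_{K,k}^{2}$, independent of $i$, so the integral is a multiple of $\sum_{i}a_{i}=0$. This is the exact higher-dimensional counterpart of the vanishing integral in the proof of Theorem~\ref{superclose}.

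With these two facts, $\|\Pi_{h}^{(d)}(a\nabla u)-\bar{\bm{\sigma}}_{h}^{(d)}\|^{2}$ splits as $I+II$ as in \eqref{total}, with $I=\ab{\Pi_{h}^{(d)}(a\nabla u)-a\nabla u}{\bm{\tau}_{h}}$ and $II=\ab{a\nabla_{h}(u-\bar{u}_{h}^{(d)})}{\bm{\tau}_{h}}$: the $\bm{r}_{h}^{(d)}P_{h}(\cdots)$ contribution is killed by the orthogonality, and $Q_{h}^{(d)}$ is removed because $\bm{\tau}_{h}\in\nabla_{h}\mathcal{V}_{0,h}^{(d)}$. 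Then $II=II_{1}+II_{2}$ is treated as before: $II_{2}$ is bounded by \eqref{apriori} and the $d$-dimensional Lemma~\ref{superuubar}, while $II_{1}$ is a sum of face integrals dominated by $h\sum_{F}\|\llbracket u-\bar{u}_{h}^{(d)}\rrbracket\|_{L^{2}(F)}\|\{\bm{\tau}_{h}\}\|_{L^{2}(F)}$ via \eqref{equivRT}, $\fint_{F}\llbracket\bar{u}_{h}^{(d)}\rrbracket=\bm{0}$, and the trace and inverse inequalities, both giving $O(h^{2}\|u\|_{H^{2}})\|\bm{\tau}_{h}\|$. Finally $\|\bm{\sigma}_{h}^{(d)}-\bar{\bm{\sigma}}_{h}^{(d)}\|\lesssim h^{2}\|u\|_{H^{2}}$ follows from Lemma~\ref{superuubar}, and the triangle inequality closes the estimate.

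The one step that genuinely needs new work, rather than transcription, is the bound \eqref{bdI} on $I$, i.e. the Dur\'an-type superconvergence estimate $|\ab{\Pi_{h}^{(d)}\bm{\sigma}-\bm{\sigma}}{\bm{\tau}_{h}}|\lesssim|\bm{\sigma}|_{H^{2}}\|\bm{\tau}_{h}\|$ for the cubical RT interpolant tested against a divergence-free RT field; I expect this to be the main obstacle. I would obtain it by exploiting the tensor-product structure of $\mathcal{RT}_{h}^{(d)}$: on each $K$ the error $\Pi_{h}^{(d)}\bm{\sigma}-\bm{\sigma}$ is $L^{2}(K)$-orthogonal, component by component, to polynomials that are constant in the corresponding normal direction, which is precisely the structure of the components of $\bm{\tau}_{h}$, so a component-wise reduction to the one-dimensional superconvergence identity followed by the Bramble--Hilbert lemma yields the extra power of $h$. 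Once this $d$-dimensional analogue of Lemma~3.1 of \cite{Duran1990} is in place, the remainder of the argument is routine.
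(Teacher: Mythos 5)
Your proposal is correct and is essentially the paper's own proof: the paper establishes Theorem \ref{superclose3d} precisely by invoking Lemma \ref{lemma2} and repeating the argument of Theorem \ref{superclose} with faces in place of edges, and your transcription supplies exactly the ingredients that repetition needs --- the auxiliary problem, the $d$-dimensional analogue of Lemma \ref{superuubar}, the conforming flux $\bar{\bm{\sigma}}_{h}^{(d)}$, the identity $\nabla_{h}\cdot\bm{r}_{h}^{(d)}=1$, and, most importantly, the observation that $c_{i}\ell_{K,i}^{2}$ is independent of $i$, which makes $\int_{K}\bm{r}_{h}^{(d)}\cdot\bm{\tau}_{h}=0$ for every elementwise divergence-free broken RT field and is the whole reason the correction field $\bm{r}_h^{(d)}$ is designed as it is.

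One caveat concerning your last paragraph, on the $d$-dimensional analogue of Dur\'an's Lemma 3.1 \cite{Duran1990}. The orthogonality you assert there is false as stated: the components of $\bm{\sigma}-\Pi_{h}^{(d)}\bm{\sigma}$ are \emph{not} $L^{2}(K)$-orthogonal to the components of $\bm{\tau}_{h}$. For instance, with $\bm{\sigma}=(x_{1}^{2},0)^{T}$ on $K=[0,\ell_{1}]\times[0,\ell_{2}]$ one computes $(\Pi_{h}\bm{\sigma})_{1}=\ell_{1}x_{1}$ and
\begin{equation*}
\int_{K}\bigl(x_{1}^{2}-\ell_{1}x_{1}\bigr)\cdot 1\,d\bm{x}=-\tfrac{1}{6}\ell_{1}^{3}\ell_{2}\neq 0,
\end{equation*}
even though the constant $1$ is an admissible first component of an RT test field. (This does not contradict the estimate itself, since here $h^{2}|\bm{\sigma}|_{H^{2}(K)}\|\bm{\tau}_{h}\|_{L^{2}(K)}\sim h^{4}$ as well; it only shows the claimed pointwise orthogonality is not the mechanism.) What the Bramble--Hilbert argument actually needs, and what is true, is that the elementwise functional $\bm{\sigma}\mapsto\int_{K}(\bm{\sigma}-\Pi_{h}^{(d)}\bm{\sigma})\cdot\bm{\tau}_{h}$ vanishes whenever every component of $\bm{\sigma}$ is an affine polynomial: components lying in $Q_{1}^{(i)}(K)$ are reproduced exactly, while for $\sigma_{i}=x_{j}$ with $j\neq i$ the interpolation error equals $x_{j}-x_{K,j}$, which has zero mean in the $x_{j}$-variable, and the $i$-th component of $\bm{\tau}_{h}$ is constant in $x_{j}$, so the product integrates to zero. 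Bramble--Hilbert on the reference cube and scaling then give $|\ab{\bm{\sigma}-\Pi_{h}^{(d)}\bm{\sigma}}{\bm{\tau}_{h}}|\lesssim h^{2}|\bm{\sigma}|_{H^{2}}\|\bm{\tau}_{h}\|$ in any dimension, with no divergence-free hypothesis needed on cubical meshes (unlike the triangular case used in Theorem \ref{supercloseCR}). With this repair of the vanishing property, your argument is complete and coincides with the paper's.
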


In particular, for $d=3$, we have
\begin{equation*}
\bm{r}_{h}^{(3)}|_{K}(\bm{x})=\frac{
\big(\ell_{K,2}^{2}\ell_{K,3}^{2}(x_{1}-x_{K,1}),\ell_{K,3}^{2}\ell_{K,1}^{2}(x_{2}-x_{K,2}),\ell_{K,1}^{2}\ell_{K,2}^{2}(x_{3}-x_{K,3})\big)^{T}}{\ell_{K,1}^{2}\ell_{K,2}^{2}+\ell_{K,2}^{2}\ell_{K,3}^{2}+\ell_{K,3}^{2}\ell_{K,1}^{2}}.
\end{equation*}

Let $A_{h}^{(3)}$ be the face-based weighed averaging generalized from $A_{h}$ in Definition \ref{defAh}. Using an argument very similar to the proof of Theorem \ref{superapprox}, one could show that $A_{h}^{(3)}\Pi_{h}^{(3)}\bm{\sigma}$ superconverges to $\bm{\sigma}$ in the $L_2$-norm. Hence we obtain the superconvergent flux recovery in $\mathbb{R}^{3}$.
\begin{theorem}\label{superconvergence3d}
For $d=3$, it holds that
\begin{equation*}
\|a\nabla u-A_{h}^{(3)}\bm{\sigma}_{h}^{(3)}\|\lesssim h^{2}\|u\|_{H^{3}}.
\end{equation*}
\end{theorem}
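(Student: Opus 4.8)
The plan is to follow the two-step template of Theorem \ref{superconvergence} verbatim, substituting the three-dimensional ingredients. I would begin from the triangle inequality
\begin{equation*}
\|a\nabla u-A_{h}^{(3)}\bm{\sigma}_{h}^{(3)}\|\leq\|a\nabla u-A_{h}^{(3)}\Pi_{h}^{(3)}\bm{\sigma}\|+\|A_{h}^{(3)}(\Pi_{h}^{(3)}\bm{\sigma}-\bm{\sigma}_{h}^{(3)})\|,
\end{equation*}
with $\bm{\sigma}=a\nabla u$, and control the first term by a super-approximation property of $A_{h}^{(3)}$ and the second by the $L_2$-stability of $A_{h}^{(3)}$ together with the supercloseness bound of Theorem \ref{superclose3d}. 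The only genuine work is to establish the three-dimensional counterpart of Theorem \ref{superapprox}; everything else is the same assembly as in the planar case.

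For the stability estimate $\|A_{h}^{(3)}\bm{\tau}_{h}\|\lesssim\|\bm{\tau}_{h}\|$, I would repeat the argument behind \eqref{a}. The face-based weighted average is stable in $L_\infty$ by construction, so an inverse inequality on each cube $K$ gives
\begin{equation*}
\|A_{h}^{(3)}\bm{\tau}_{h}\|_{L_2(K)}\lesssim h^{\frac{3}{2}}\|A_{h}^{(3)}\bm{\tau}_{h}\|_{L_\infty(K)}\lesssim h^{\frac{3}{2}}\|\bm{\tau}_{h}\|_{L_\infty(\omega_K)}\lesssim\|\bm{\tau}_{h}\|_{L_2(\omega_K)},
\end{equation*}
and summing over $K$ yields the claim; here the powers of $h$ balance because $|K|^{\frac{1}{2}}\sim h^{\frac{3}{2}}$ in three dimensions. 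This step is routine.

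The crux is the super-approximation bound $\|\bm{\tau}-A_{h}^{(3)}\Pi_{h}^{(3)}\bm{\tau}\|\lesssim h^{2}|\bm{\tau}|_{H^{2}}$. As in the proof of Theorem \ref{superapprox}, it reduces to showing that $A_{h}^{(3)}\Pi_{h}^{(3)}$ reproduces trilinear fields at every face centroid $m$, that is, $(\bm{\tau}_1-A_{h}^{(3)}\Pi_{h}^{(3)}\bm{\tau}_1)(m)=\bm{0}$ for all $\bm{\tau}_1\in\prod_{j=1}^{3}Q_{1,1,1}(\omega_F)$. Since $\Pi_{h}^{(3)}$ already preserves $\prod_{j=1}^{3}Q_1^{(j)}$, it suffices to check the off-diagonal trilinear monomials absent from this space, such as $x_2x_3\,\bm{e}_1$ and $x_1x_3\,\bm{e}_1$. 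For each such monomial the local RT interpolant reduces to an element-wise constant on the two adjacent cubes (exactly as the planar displays for $\Pi_h(y,0)^T$ and $\Pi_h(0,x)^T$ reduce), and the volume-weighted average of Definition \ref{defAh} then returns the exact value $\bm{\tau}_1(m)$. Once this reproduction identity holds, the local Bramble--Hilbert and approximation estimate \eqref{l2max}--\eqref{local} transfers unchanged and yields $\|\bm{\sigma}-A_{h}^{(3)}\Pi_{h}^{(3)}\bm{\sigma}\|\lesssim h^{2}|\bm{\sigma}|_{H^{2}}\lesssim h^{2}\|u\|_{H^{3}}$, bounding the first term; boundary faces are handled by the extrapolation formula of Definition \ref{defAh} applied along the axis orthogonal to the boundary face.

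With both properties in hand, the second term obeys $\|A_{h}^{(3)}(\Pi_{h}^{(3)}\bm{\sigma}-\bm{\sigma}_{h}^{(3)})\|\lesssim\|\Pi_{h}^{(3)}\bm{\sigma}-\bm{\sigma}_{h}^{(3)}\|\lesssim h^{2}\|u\|_{H^{3}}$ by Theorem \ref{superclose3d}, and combining the two bounds finishes the proof. I expect the main obstacle to be purely combinatorial: in three dimensions there are more off-diagonal monomials to inspect and the explicit form of $\Pi_{h}^{(3)}$ on each is bulkier than in the plane, yet every case reduces to the same one-dimensional midpoint-averaging identity that underlies the planar computation, so no new analytic estimate is required.
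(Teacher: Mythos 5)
Your route is exactly the paper's: the paper proves this theorem by remarking that the arguments of Theorems \ref{superapprox} and \ref{superconvergence} carry over --- triangle inequality, $L_2$-stability of $A_{h}^{(3)}$, and super-approximation obtained from reproduction of (componentwise) trilinear fields at face midpoints. Your stability computation with the $h^{3/2}$ volume factor and your case-check of the off-diagonal monomials (where the swapped volume weights in the averaging produce the cancellation) are both sound.

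The one place you go wrong is the claim that \eqref{l2max}--\eqref{local} ``transfers unchanged.'' It does not: in $\mathbb{R}^{d}$ the correct form of \eqref{l2max} is
\begin{equation*}
\inf_{\bm{\tau}_{1}}\|\bm{\tau}-\bm{\tau}_{1}\|_{L_{\infty}(\omega_{K})}\lesssim h^{2-\frac{d}{2}}\,|\bm{\tau}|_{H^{2}(\omega_{K})},
\end{equation*}
the infimum taken over componentwise trilinear $\bm{\tau}_{1}$, so in three dimensions the right-hand side carries $h^{1/2}$, not $h$; as literally stated the 2d inequality is false for $d=3$, since the scaling argument behind it cannot produce a power better than $h^{2-d/2}$. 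The final bound is rescued because the $L_{2}$--$L_{\infty}$ conversion on $K$ now contributes $|K|^{1/2}\sim h^{3/2}$ (which you correctly used in the stability step), and $h^{3/2}\cdot h^{1/2}=h^{2}$. This is not a cosmetic point: it is the only substantive content of the paper's own proof of this theorem, because the validity of \eqref{l2max} with $h^{2-d/2}$ rests on the embedding $H^{2}(\omega_{K})\hookrightarrow L_{\infty}(\omega_{K})$, which holds precisely for $d\leq 3$ and is the stated reason the theorem (and your argument) cannot be pushed to $d>3$. With that scaling stated correctly, your proof is complete and coincides with the paper's.
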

\begin{proof}
The proof is same as Theorems \ref{superapprox} and \ref{superconvergence}. We require $d=3$ since the inequality \eqref{l2max} with $h^{2-\frac{d}{2}}$ replacing $h$ does not hold for $d>3$.\qed
\end{proof}


\section{Numerical experiments}\label{sec5}
\begin{table}[tbhp]
\caption{Rate of convergence in $\mathbb{R}^{2}$}
\label{table2d}
\centering
\begin{tabular}{|c|c|c|c|c|}
\hline
ne & $\|u-u_{h}\|$
 &$\|a\nabla u-a\nabla_{h}u_{h}\|$
&  $\|\Pi_{h}(a\nabla u)-\tilde{\bm{\sigma}}_{h}\|$
&  $\|a\nabla u-A_{h}\tilde{\bm{\sigma}}_{h}\|$ \\
\hline
6 &3.455e-02&1.157e+00&5.551e-01&1.451e+00\\
24 &8.394e-03&5.723e-01&1.366e-01&4.591e-01\\
96  &2.112e-03&2.890e-01&3.509e-02&6.692e-02\\
384 &5.350e-04&1.457e-01&8.812e-03&1.274e-02\\
1536 &1.352e-04&7.316e-02&2.227e-03&2.969e-03\\
6144 &3.410e-05&3.671e-02&5.638e-04&7.318e-04\\
24576 &8.582e-06&1.841e-02&1.419e-04&1.826e-04\\ 
\hline
             order &2.045&1.023&2.042&2.098\\
\hline
\end{tabular}
\end{table}
In this section, we test the recovery operators $A_{h}$ and $A_{h}^{(3)}$. Instead of using $\bm{\sigma}_{h}$ analyzed in Sections \ref{sec3} and \ref{sec4}, we compute the modified flux $\tilde{\bm{\sigma}}_{h}$ in \eqref{sigmatilde} in the 2d experiment. For the numerical example in $\mathbb{R}^3$, we modify $\bm{\sigma}_{h}^{(3)}$ in Theorem \ref{superclose3d} and compute the flux $\tilde{\bm{\sigma}}_{h}^{(3)}$ given by 
\begin{equation}\label{sigmatilde3}
    \tilde{\bm{\sigma}}_{h}^{(3)}|_K=Q_{h}^{(3)}(a\nabla_{h}u_h^{(3)})-\bm{r}_{h}^{(3)}(f-cu_{h}^{(3)}-\bm{b}\cdot\nabla_{h}u_{h}^{(3)})(\bm{x}_K)
\end{equation}
on each cube $K\in\mathcal{T}_h,$
where $\bm{x}_K$ is the centroid of $K$. It is noted that $\nabla _{h}\mathcal{V}_{0,h}$ and $\nabla_{h}\mathcal{V}^{(3)}_{0,h}$ are broken spaces without any inter-element continuity. As a consequence, the projection $Q_{h}$ onto $\nabla _{h}\mathcal{V}_{0,h}$ in \eqref{sigmatilde} and the projection $Q^{(3)}_{h}$  onto $\nabla_{h}\mathcal{V}^{(3)}_{0,h}$ in \eqref{sigmatilde3} can be computed element-wise.  Based on Definition \ref{defAh},   the value of  $A_h\tilde{\bm{\sigma}}_h\in\widetilde{\mathcal{V}}_h$ at the midpoint of each interior edge is determined by a special weighted average of $\tilde{\bm{\sigma}}_h$ across that edge, while an extrapolation is used to compute $A_h\tilde{\bm{\sigma}}_h$ at midpoints of boundary edges. Recall that midpoint function values at all edges form the dofs of $\widetilde{\mathcal{V}}_h$ and correspond to locally supported basis functions of  $\widetilde{\mathcal{V}}_h$. Therefore one could combine midpoint values of $A_h\tilde{\bm{\sigma}}_h$ and the induced basis of  $\widetilde{\mathcal{V}}_h$ to compute the value of $A_h\tilde{\bm{\sigma}}_h$ at any necessary discrete points. The postprocessed flux $A^{(3)}_h\tilde{\bm{\sigma}}^{(3)}_h$ in $\mathbb{R}^3$ is calculated in a similar way. 

To compute the RT interpolant $\Pi_h(a\nabla u)$, it suffices to use RT edge basis functions and the dof $\int_E(a\nabla u)\cdot\bm{n}_E$ on each edge $E\in\mathcal{E}_h,$ see \eqref{RTinterpolation}. The 4-point Gaussian quadrature $\{(b_i,c_i)\}_{i=1}^4$ is used to approximate the edge integral $\int_E(a\nabla u)\cdot\bm{n}_E$, where $\{b_i\}_{i=1}^4$ are positive weights and $\{c_i\}_{i=1}^4$ are coordinates of quadrature points on a reference interval. As for the interpolant $\Pi^{(3)}_h(a\nabla u)$ in $\mathbb{R}^3$, the related face integral $\int_F(a\nabla u)\cdot\bm{n}_F$ (see \eqref{RTinterpolation3}) is evaluated using the 2d \emph{tensor product} of $\{(b_i,c_i)\}_{i=1}^4$ with 16 interior quadrature points on each rectangular face $F$. When assembling stiffness matrices and right hand sides, we use the 2d (resp.~3d) tensor product of $\{(b_i,c_i)\}_{i=1}^4$ to approximate integrals on rectangular (resp.~cubical) elements. The 3d quadrature rule in each cube makes use of $4^3=64$ quadrature points.

The basis of $\mathcal{V}_{0,h}$ (resp.~$\mathcal{V}^{(3)}_{0,h}$) is chosen to be dual to the dofs $\{\fint_E\cdot\}_{E\in\mathcal{E}^o_h}$ (resp.~$\{\fint_F\cdot\}_{F\in\mathcal{F}^o_h}$). With such a basis and the aforementioned element-wise approximate integration, we could numerically solve \eqref{RT} (resp.~\eqref{RT3}) to obtain the dofs of $u_h$ (resp.~$u_h^{(3)}$). Those dofs are then combined with the dual basis to calculate
$u_h$ and $u_h^{(3)}$ at the discrete quadrature points necessary for integral quantities shown in Tables \ref{table2d} and \ref{table3d}.

In each table, `ne' denotes the number of elements in $\Th$. The order of convergence is $p$ such that the error $\approx Ch^{p}$ with some constant $C$ independent of $h$. We evaluate $p$ by least squares using the data in Tables \ref{table2d} and \ref{table3d}. 

\textbf{Problem 1:}
Consider the equation \eqref{elliptic} with $\Omega=[0,1]\times[0,1]$, 
\begin{equation*}
\begin{aligned}
&u=\exp(2x_{1}+x_{2})x_{1}^{2}(x_{1}-1)^{2}x_{2}^{2}(x_{2}-1)^{2},\\
&a(\bm{x})=\exp(x_{1}),\quad\bm{b}(\bm{x})=\bm{x},\quad c(\bm{x})=\exp(x_{1}+x_{2}),
\end{aligned}
\end{equation*}
and corresponding $g$ and $f$. The initial rectangular mesh is
$$\Th=\bigcup_{0\leq i\leq2,0\leq j\leq1}[x_{1,i},x_{1,i+1}]\times[x_{2,j},x_{2,j+1}],$$
where $x_{1,0}=0, x_{1,1}=0.4, x_{1,2}=0.8, x_{1,3}=1$ and $x_{2,0}=0, x_{2,1}=0.7, x_{2,2}=1$. We refine the mesh by connecting the midpoints of opposite edges of each rectangle. In the refinement, we randomly perturb the mesh along $x_{1}$- and $x_{2}$-directions by $20\%$ of the length of the smallest interval in that direction, respectively. Numerical results are presented in Table \ref{table2d}.  The first three rows in Table \ref{table2d} are not used to evaluate the order since they are outside of the asymptotic regime. 

\begin{table}[tbhp]
\caption{Rate of convergence in $\mathbb{R}^{3}$}
\label{table3d}
\centering
\begin{tabular}{|c|c|c|c|c|}
\hline
ne & $\|u-u_{h}^{(3)}\|$
 &$\|a\nabla u-a\nabla_{h}u_{h}^{(3)}\|$
&  $\|\Pi_{h}^{(3)}(a\nabla u)-\tilde{\bm{\sigma}}_{h}^{(3)}\|$
&  $\|a\nabla u-A_{h}^{(3)}\tilde{\bm{\sigma}}_{h}^{(3)}\|$ \\
\hline
8&9.341e-01&1.280e+01&1.863e+01 	& 2.238e+01 \\
64&4.158e-01&9.418e+00 	& 5.547e+00 &	 1.516e+01 \\
512&1.200e-01 	& 5.032e+00 	& 1.902e+00 &	 3.448e+00 \\
4096&3.010e-02 &	 2.525e+00 &	 4.967e-01& 	 8.599e-01 \\
32768&7.661e-03 	& 1.269e+00 	& 1.285e-01 &	 1.709e-01 \\
\hline
             order &2.085&1.044&2.042&2.274\\
\hline
\end{tabular}
\end{table}

\textbf{Problem 2:}
In the second experiment, we consider the equation \eqref{elliptic} with $\Omega=[0,1]\times[0,1]\times[0,1]$, 
\begin{equation*}
\begin{aligned}
&u(\bm{x})=\exp(x_{1}+x_{2})\sin(3\pi x_{1})\sin(2\pi x_{2})\sin(\pi x_{3}),\\
&a(\bm{x})=\exp(x_{1}+x_{2}+x_{3}),\quad\bm{b}(\bm{x})=\bm{0},\quad c(\bm{x})=0,
\end{aligned}
\end{equation*}
and corresponding $g$ and $f$. The initial cubical mesh is
$$\Th=\bigcup_{0\leq i\leq1,0\leq j\leq1,0\leq k\leq1}[x_{1,i},x_{1,i+1}]\times[x_{2,j},x_{2,j+1}]\times[x_{3,k},x_{3,k+1}],$$
where 
\begin{align*}
    &(x_{1,0},x_{1,1},x_{1,2})=(0,0.5,1),\\
    &(x_{2,0},x_{2,1},x_{2,2})=(0,0.6,1),\\ &(x_{3,0},x_{3,1},x_{3,2})=(0,0.4,1).
\end{align*}
We refine the mesh by connecting the centroid of opposite faces of each element. In the refinement, we randomly perturb the mesh along $x_{1}$-, $x_{2}$-, and $x_{3}$-directions by $20\%$ of the length of the smallest interval in that direction, respectively. Numerical results are presented in Table \ref{table3d}.  For similar reason, the first two rows are not used. 

In the two experiments, since the mesh is randomly perturbed, computed errors are not exactly the same (but similar) every time. The numerical results show that our superconvergence estimates Theorems \ref{superclose}, \ref{superconvergence}, and \ref{superclose3d} are asymptotically sharp. We also note that the rate of convergence in the last column of Table \ref{table3d} is slightly larger than the predicted order $2$ from Theorem \ref{superconvergence3d}. One possible reason is that the mesh size in $\mathbb{R}^3$ is not small enough. In fact, the numerical solution of \eqref{RT3} on the uniform refinement of the finest mesh in Table \ref{table3d} is beyond the computational power of our machine.

\section{Concluding remarks}\label{sec6}
We have developed a superconvergent flux recovery process for NCRT and CR element methods for second order elliptic equations. It is well-known that these elements are originally designed for efficiently solving the Stokes equation, see \cite{CR1973,RT1992}. Hence, extending our analysis and results to the Stokes equation is of  practical interest and a direction of future research. 

\section{Declarations}

\textbf{Funding} The author did not receive support from any organization for this work.

\vspace{0.2cm}
\noindent\textbf{Conflicts of interest} The author has no relevant financial or non-financial interests to disclose. 

\vspace{0.2cm}
\noindent\textbf{Availability of data} Data sharing is not applicable to this article as no datasets were generated or analysed during the current study.

\vspace{0.2cm}
\noindent\textbf{Code availability} The code used in this study is available from the author upon request.




\end{document}
